\documentclass{amsart}
\usepackage{amsmath}
\usepackage{amssymb}
\usepackage{graphicx}
\usepackage{pgf}
\usepackage{tikz}
\usetikzlibrary{arrows,automata}
\input xy
\xyoption{all}
\newtheorem{theorem}{Theorem}[section]

\newtheorem{question}{Question}
\newtheorem{lemma}[theorem]{Lemma}

\newtheorem{corollary}[theorem]{Corollary}
\newtheorem{proposition}[theorem]{Proposition}
\theoremstyle{definition}

\newtheorem{remark}{Remark}
\newenvironment{Proof}{{\textit{Proof}.}\ }{~$\square$\vspace{0.2truecm}}

\newcommand{\Cal}[1]{{\mathcal #1}}

\newcommand{\Core}{\mbox{\rm Core}}

\newcommand{\GL}{\mbox{\rm GL}}
\newcommand{\SL}{\mbox{\rm SL}}

\newcommand{\Z}{\mathbb{Z}}

\newcommand{\R}{\mathbb{R}}

\begin{document}
    \title[Intersection graphs]{Intersection graphs of  almost subnormal subgroups in general skew linear groups }
    
\author[Bui Xuan Hai]{Bui Xuan Hai$^{1,2}$}%\thanks{The first and the fourth authors  were funded by  Vietnam National University HoChiMinh City (VNUHCM) under grant number B2020-18-02}
\address{Bui Xuan Hai ({\tiny Corresponding author})\\[1] Faculty of Mathematics and Computer Science, University of Science; [2] Vietnam National University, Ho Chi Minh City, Vietnam.}
\email{bxhai@hcmus.edu.vn}

\author[Binh-Minh Bui-Xuan]{Binh-Minh Bui-Xuan$^{3}$}
\address{Binh-Minh Bui-Xuan\\ [3]Laboratoire d'Informatique de Paris 6 (LIP6), Centre National de la Recherche
	Scientifique (CNRS), Sorbonne Universite (SU UPMC)}
\email{buixuan@lip6.fr}

\author[Le Van Chua]{Le Van Chua$^{4,2}$}
\address{Le Van Chua\\[4] An Giang University; [2] Vietnam National University, Ho Chi Minh City, Vietnam.}
\email{lvchua.tag@moet.edu.vn}

\author[Mai Hoang Bien]{Mai Hoang Bien$^{1, 2}$}
\address{Mai Hoang Bien\\[1] Faculty of Mathematics and Computer Science, University of Science; [2] Vietnam National University, Ho Chi Minh City, Vietnam.}
\email{mhbien@hcmus.edu.vn}

\keywords{Intersection graphs; complete graphs; division rings; almost subnormal subgroups.  \\
\protect \indent 2010 {\it Mathematics Subject Classification.} 05C25, 16K20.}

 \maketitle

\begin{abstract} Let $D$ be a division ring, $n$  a positive integer, and $\GL_n(D)$ the general linear group of degree $n$ over $D$. In this paper, we study the induced subgraph of the intersection graph of $\GL_n(D)$ generated by all non-trivial proper almost subnormal subgroups of $\GL_n(D)$. We show that this subgraph is complete if it is non-null. This property will be used to study subgroup structure of a division ring. In particular, we prove that every non-central almost subnormal subgroup of the multiplicative group $D^*$ of a division ring $D$ contains a non-central subnormal subgroup of $D^*$.
\end{abstract}
\section{Introduction}

Let $D$ be a division ring, $n$ a positive integer, and $\GL_n(D)$  the general linear group of degree $n$ over $D$. Let $\Gamma(\GL_n(D))=(V, E)$ be the intersection graph of $\GL_n(D)$, where $V$ and $E$ denote the vertex set and the edge set respectively. Recall that 
$V$ consists of all non-trivial proper subgroups of $\GL_n(D)$. Two distinct vertices $A$ and $B$  are \textit{adjacent} if $A\cap B\neq  1$. The symbol $\{A, B\}$ denotes the edge between $A$ and $B$ when $A$ and $B$ are adjacent. Therefore, we have
$$V=\{A\mid A \mbox{ is  a non-trivial proper subgroup of }  \GL_n(D)\},$$ 
$$E=\{ \{A, B\} \mid A\ne B, A\cap B\ne 1\}.$$

Let $A_1, A_2, \dots, A_n\in V$.  If $A_i$ and $A_{i+1}$ are adjacent for any $1\le i\le n-1$, then we say that there is a \textit{path} from $A_1$ to $A_n$, and we denote this path by $(A_1, A_2, \dots, A_n)$.

Historically, the intersection graph of a group was firstly defined by B. Csakany and G. Pollak  \cite{PaCsPo_69} in 1969 with inspiration from the work \cite{Pa_Bo_64}. There are a lot of interesting results on intersection graphs of some classes of groups and its induced subgraphs (e.g., see \cite{Pa_AhTa_16, Pa_BiVi_20, Pa_DeRa_16, Pa_KaYa_15,  Pa_Ma_16, Pa_RaDe_16,Pa_Sh_10}). 

In this paper, we are interested in the study of properties of the induced subgraph $\Delta(\GL_n(D))$ of $\Gamma(\GL_n(D))$ generated by all non-central almost subnormal subgroups of $\GL_n(D)$, and their application in the study of the subgroup structure of $\GL_n(D)$. Observe that if $n=1$ and $D$ is a field, then $\Delta(\GL_n(D))$ is null, so in this paper, we always assume that $D$ is non-commutative in case $n=1$. We shall prove that $\Delta(\GL_n(D))$  is a clique of $\Gamma(\GL_n(D))$, that is, $\Delta(\GL_n(D))$ is complete, except the case when $n\ge 2$ but $D=F$ is a finite field. This result is very meaning in case $n=1$.  In this case, using the completeness of the subgraph $\Delta(\GL_1(D))$, we give the  affirmative answer to \cite[Question~ 2.7]{Pa_DeBiHa_19} concerning one problem on the subgroup structure of division rings. In fact, we show that every non-central almost subnormal subgroup of the multiplicative group $D^*$ of $D$ contains a non-central subnormal subgroup of $D^*$. This fact allows us in various cases to reduce the study of almost subnormal subgroups to the study of subnormal subgroups in division rings.

The content of the paper is as follows: In Section 2, we present some basic results on almost subnormal subgroups of $\GL_n(D)$ in order to study the induced subgraph $\Delta(\GL_n(D))$ of the intersection graph $\Gamma(\GL_n(D))$ of the group $\GL_n(D)$. Among results, we show that $\Delta(\GL_n(D))$ is complete (see Theorem~~\ref{th:2.8}). Section~ 3 spends for the case  $n=1$, where we investigate in details the structure of the subgraph $\Delta(\GL_1(D))$ and then using its properties to give the affirmative answer to \cite[Question~2.7]{Pa_DeBiHa_19} (see Theorem~\ref{th:3.3}). Finally, as an illustration, we show what from Theorem~\ref{th:3.3} we can get some results on almost subnormal subgroups in division rings using previous results on subnormal subgroups.

\section{The induced subgraph of $\Gamma(\GL_n(D))$ generated by\\ non-central almost subnormal subgroups}

Let $G$ be any group. Recall that a subgroup $N$ of $G$ is \textit{subnormal} in $G$ if there exists a sequence of subgroups $$N=N_r\triangleleft N_{r-1}\triangleleft N_{r-2}\triangleleft \dots \triangleleft N_1\triangleleft N_0=G,$$ 
where $N_{i+1}\triangleleft N_i$ for every $0\leq i<r$.

In accordance with Hartley \cite{Pa_Ha_89}, we say that a subgroup $N$ is \textit{almost subnormal} in $G$ if there exists a sequence of subgroups 
$$N=N_r< N_{r-1}< N_{r-2}< \dots < N_1< N_0=G$$ 
such that for every $0\le i<r$, either $N_{i+1}\triangleleft N_i$ or the index $[N_i: N_{i+1}]$ is finite. Such a sequence is called an \textit{almost normal series} in $G$. If no such a sequence of lesser length exists, then we say that $N$ is an almost subnormal subgroup of \textit{length} (or \textit{distance}) $n$.  Clearly, every subnormal subgroup in a given group is almost subnormal and the converse is not true. In this section, we consider almost subnormal subgroups in the general linear group $\GL_n(D)$ of degree $n$ over a division ring $D$. These subgroups were firstly studied in \cite{Pa_nbh_17}, and it was proved that if $D$ is infinite and $n\geq 2$ then every almost subnormal subgroup of $\GL_n(D)$ is normal \cite[Theorem~ 3.3]{Pa_nbh_17}. But this is not the case if $n=1$. Indeed, in  \cite[Section 2]{Pa_DeBiHa_19} and \cite{Pa_nbh_17}, it was shown that there exist infinitely many division rings whose multiplicative groups  contain almost subnormal subgroups that are not subnormal. Although for $n\ge 2$ and an infinite division ring $D$, in the group $\GL_n(D)$, every  almost subnormal subgroup is normal, we shall continue to use ``almost subnormal" instead of ``normal" to compare the results with the corresponding ones in the case $n=1$.

The aim of this section is to prove that the induced subgraph $\Delta(\GL_n(D))$ of the intersection graph $\Gamma(\GL_n(D))$ generated by all non-central almost subnormal subgroups of $\GL_n(D)$ is  a clique in $\Gamma(\GL_n(D))$. As an application, using this fact, we can get some property on the subgroup structure of $\GL_n(D)$. In particular, we can give the affirmative answer to \cite[Question~ 2.7]{Pa_DeBiHa_19}. The following lemma is obvious, so we omit its proof.
\begin{lemma}\label{lem:2.1} Assume that $f: A\longrightarrow B$ is a group epimorphism. 
	
	\begin{enumerate}
		\item If $N$ is an almost subnormal subgroup in $A$, then $f(N)$ is an almost subnormal subgroup in $B$.
		\item If $M$ is an almost subnormal subgroup in $B$, then $f^{-1}(M)$ is an almost subnormal subgroup in $A$. 
	\end{enumerate}
\end{lemma}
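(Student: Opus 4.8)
The plan is to prove both statements directly by transporting an almost normal series across $f$ (respectively across $f^{-1}$) term by term, verifying that each of the two defining conditions — normality of a term in its predecessor, or finiteness of the index — is preserved at every step, and then tidying up the resulting chain so that all inclusions become strict.

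For (1), I would start with an almost normal series $N = N_r < N_{r-1} < \dots < N_0 = A$ and apply $f$ to obtain the chain $f(N) = f(N_r) \le f(N_{r-1}) \le \dots \le f(N_0) = B$, where the final equality uses surjectivity. At a step where $N_{i+1} \triangleleft N_i$, the restriction $f|_{N_i} \colon N_i \to f(N_i)$ is a surjective homomorphism, so the image $f(N_{i+1})$ of the normal subgroup $N_{i+1}$ is normal in $f(N_i)$. At a step where $[N_i : N_{i+1}]$ is finite, the natural map of coset spaces $N_i/N_{i+1} \to f(N_i)/f(N_{i+1})$ sending $aN_{i+1}$ to $f(a)f(N_{i+1})$ is well defined and surjective, so $[f(N_i) : f(N_{i+1})] \le [N_i : N_{i+1}] < \infty$. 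The only genuine nuisance is that consecutive images may coincide, $f(N_{i+1}) = f(N_i)$; I would simply discard the repetitions, which leaves a strictly descending chain whose retained steps each still satisfy one of the two conditions, hence an almost normal series from $f(N)$ to $B$.

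For (2), I would start with $M = M_s < M_{s-1} < \dots < M_0 = B$ and take preimages, $f^{-1}(M) = f^{-1}(M_s) \le \dots \le f^{-1}(M_0) = A$. Here surjectivity is even more helpful: since $f(f^{-1}(X)) = X$ for every $X \subseteq B$, each strict inclusion $M_{i+1} < M_i$ forces a strict inclusion $f^{-1}(M_{i+1}) < f^{-1}(M_i)$, so no cleanup is needed. Fixing an index $i$, I would work with the surjection $g = f|_{f^{-1}(M_i)} \colon f^{-1}(M_i) \to M_i$, whose kernel is $\ker f$, contained in both preimages. If $M_{i+1} \triangleleft M_i$, then $f^{-1}(M_{i+1}) = g^{-1}(M_{i+1})$ is normal in $f^{-1}(M_i)$ as the preimage of a normal subgroup. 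If $[M_i : M_{i+1}]$ is finite, the correspondence theorem for $g$ yields an index-preserving bijection $f^{-1}(M_i)/f^{-1}(M_{i+1}) \cong M_i/M_{i+1}$, so the index stays finite (indeed equal). This gives an almost normal series from $f^{-1}(M)$ to $A$.

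Neither direction presents a real obstacle, which is why the authors declare the lemma obvious; the only point requiring a moment's care is the possible collapse of the image chain in (1), handled by deleting repeated terms. The other place to be explicit rather than wave one's hand is the finite-index bookkeeping: the coset maps above show that indices can only drop under images and are exactly preserved under preimages.
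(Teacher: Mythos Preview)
Your argument is correct: transporting the series termwise and checking that normality and finite index are preserved under images and preimages is exactly the standard verification, and your handling of the possible collapses in part (1) is the right fix. The paper itself declares the lemma obvious and omits the proof entirely, so there is nothing to compare --- you have simply supplied the routine details the authors chose to skip.
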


\begin{lemma}\label{lem:2.2}
	Let $G$ be a group and $H\le K\le G.$ If $M$ is a subgroup of $G$ and $[K:H]=n<\infty$, then $[K\cap M : H\cap M]\le n$.
\end{lemma}
\begin{Proof}
	Set $H_M=H\cap M$ and $K_M=K\cap M.$ Denoted by $\Cal L_M$ the set of all distinct left cosets of $H_M$ in $K_M$ and $\Cal L$ the set of all distinct left cosets of $H$ in $K.$ Consider the map $\Phi : \Cal L_M\rightarrow \Cal L$ defined by $\Phi (aH_M)=aH$ for $a\in K_M.$ It is easy to verify that $\Phi $ is well defined and injective. Hence, $|\Cal L_M|\leqslant |\Cal L|$ which implies that $[K\cap M : H\cap M]\le n$.
\end{Proof}
\begin{lemma}\label{lem:2.3}
	Let $G$ be a group. If $M$ and $N$ are almost subnormal subgroups in $G$, then $M\cap N$ is also an almost subnormal subgroup in $G.$
\end{lemma}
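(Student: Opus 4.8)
The plan is to reduce the two-sided intersection to a single application of ``intersecting with a subgroup'' followed by transitivity of almost subnormality. First I would fix an almost normal series for $M$,
\[
M = M_r < M_{r-1} < \dots < M_1 < M_0 = G,
\]
and intersect every term with $N$ to obtain the chain
\[
M \cap N = M_r \cap N \le M_{r-1} \cap N \le \dots \le M_1 \cap N \le M_0 \cap N = N.
\]
The claim at this stage is that, after deleting any repeated terms, this is an almost normal series for $M \cap N$ inside $N$; that is, $M \cap N$ is almost subnormal in $N$.

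To verify each step of this chain I would split into the two cases allowed in the definition. If $M_{i+1} \triangleleft M_i$, then since $M_{i+1} \le M_i$ we have $M_{i+1} \cap N = M_{i+1} \cap (M_i \cap N)$, and intersecting a normal subgroup of $M_i$ with the subgroup $M_i \cap N$ of $M_i$ yields a normal subgroup, so $M_{i+1} \cap N \triangleleft M_i \cap N$. If instead $[M_i : M_{i+1}] < \infty$, then Lemma~\ref{lem:2.2}, applied with $K = M_i$, $H = M_{i+1}$ and $M = N$, gives $[M_i \cap N : M_{i+1} \cap N] \le [M_i : M_{i+1}] < \infty$. Thus every surviving step is either normal or of finite index, and $M \cap N$ is almost subnormal in $N$.

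Finally I would invoke transitivity of almost subnormality: since $N$ is almost subnormal in $G$, I can concatenate a chosen almost normal series for $N$ in $G$ with the series just produced for $M \cap N$ in $N$. Each link of the combined chain is inherited from one of the two series, hence remains either normal or of finite index, so $M \cap N$ is almost subnormal in $G$, as desired.

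I do not anticipate a serious obstacle: the normal case is the classical fact that $A \cap C \triangleleft C$ whenever $A \triangleleft B$ and $C \le B$, while the finite-index case is precisely what Lemma~\ref{lem:2.2} was arranged to supply. The only point requiring a little care is the bookkeeping involved in deleting repeated terms of the intersected chain, which cannot affect whether the surviving inclusions are normal or of finite index.
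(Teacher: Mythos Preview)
Your argument is correct and is essentially identical to the paper's: you intersect an almost normal series for one of the two subgroups with the other, use Lemma~\ref{lem:2.2} for the finite-index steps, and then apply transitivity of almost subnormality. The only cosmetic difference is that the paper intersects a series for $N$ with $M$ (landing in $M$) whereas you intersect a series for $M$ with $N$ (landing in $N$).
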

\begin{Proof}
Assume that 
$$N=N_n< N_{n-1}< \dots < N_1< N_0=G$$
is an almost normal series in $G$. Then, we have 
$$M\cap N=M\cap N_n< M\cap N_{n-1}< \dots < M\cap N_1< M\cap N_0=M.$$
For $0\le i<n$, if $N_{i+1}\triangleleft N_i,$ then $M\cap N_{i+1}\triangleleft M\cap N_i$.  If $[N_i: N_{i+1}]$ is finite, then $[M\cap N_i: M\cap N_{i+1}]$ is also finite by Lemma~\ref{lem:2.2}. Therefore, $M\cap N$ is  almost subnormal in $M$, and consequently,   $M\cap N$ is almost subnormal in $G$.
\end{Proof}

For further study, we need some results on generalized group identities. Let $G$ be any group with center $Z(G)$, and $\langle x_1,\dots, x_n\rangle$ be the free group generated by a set $\{x_1,\dots,x_n\}$ of non-commuting indeterminates. An element  $$w(x_1,\cdots,x_n)=a_1x_{i_1}^{m_1}a_2\cdots a_tx_{i_t}^{m_t}a_{t+1},$$ 
where $a_j\in G$ and $i_j\in \{1,\dots,n\}$,
in the free product $G* \langle x_1,\dots, x_n\rangle$,  is called a \textit{generalized group monomial} over $G$,  if for every $1\le j\le t-1$, the conditions $i_j=i_{j+1}$ and $m_{j}m_{j+1}<0$ imply that $a_{j+1}\not\in Z(G)$ (see \cite{Pa_GoMi_82,Pa_To_85}). 
Let $H$ be a subgroup of $G$.
We say that $w=1$ is a \textit{generalized group identity} of  $H$ or $H$ satisfies a generalized group identity $w=1$ over $G$ if $w(c_1,\cdots,c_n)=1$ for every $c_1,\cdots,c_n\in H$.  For some results on generalized group identities in skew general linear groups, we refer to \cite{Pa_GoMi_82} and \cite{Pa_To_85}. Useful results on generalized group identities of almost subnormal subgroups in division rings can be found in \cite{Pa_nbh_17}. In particular, the following result of \cite{Pa_nbh_17} will be used in the proof of the next theorem.

\begin{proposition}{\rm \cite[Theorem 2.2]{Pa_nbh_17}}\label{pro:2.4}
	Let $D$ be a division ring with infinite center $F$ and assume that $N$ is an almost subnormal subgroup in the multiplicative group $D^*$ of $D$. If $N$ satisfies a generalized group identity over $D^*$, then $N$ is central, that is, $N\subseteq F$.
\end{proposition}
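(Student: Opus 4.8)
The plan is to argue by contradiction: suppose $N\not\subseteq F$, and show that this forces $D$ to satisfy a nontrivial generalized \emph{rational} identity, after which Amitsur's theorem together with the ``richness'' of non-central almost subnormal subgroups yields a contradiction. The conclusion $N\subseteq F$ follows.

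First I would extract the structural consequences of $N$ being non-central and almost subnormal. Writing an almost normal series $N=N_r<\cdots<N_0=D^*$ and working along it (treating the normal steps and the finite-index steps separately, much as in Lemma~\ref{lem:2.3}), one shows that a non-central almost subnormal subgroup $N$ generates $D$ as a division ring and has centralizer $C_D(N)=F$. This is the crucial \emph{density} input: it says that $N$ is too large to be confined, and it is exactly what will let an identity known only on $N$ propagate to all of $D$.

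The heart of the argument --- and the step I expect to be the main obstacle --- is to promote the generalized group identity $w(c_1,\dots,c_n)=1$, which is only assumed to hold for $c_i\in N$, to a nontrivial generalized rational identity satisfied by $D$ itself. Because $w$ involves the inverses $x_{i_j}^{m_j}$ with $m_j<0$, it is intrinsically a rational rather than a polynomial expression; the condition in the definition of a generalized group monomial (that $a_{j+1}\notin Z(G)$ whenever $i_j=i_{j+1}$ and $m_jm_{j+1}<0$) is precisely what guarantees that $w$ does not collapse, so that the identity is nontrivial to begin with. To pass from $N$ to all of $D^*$, I would exploit the density from the previous step: since $N$ generates $D$ and $C_D(N)=F$ with $F$ infinite, I would specialize all but one of the indeterminates to elements built from $N$, let the remaining one vary, and use a Vandermonde/separation-of-coefficients argument over the infinite field $F$ to force the rational relation to hold generically on $D^*$. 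Controlling which specializations remain admissible and verifying that nontriviality is not lost under this propagation is the delicate core of the proof, and is where the machinery of generalized (rational) identities in the sense of Gon\c{c}alves--Mandel and Tomanov enters.

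Once $D$ satisfies a nontrivial generalized rational identity and $F=Z(D)$ is infinite, Amitsur's theorem on rational identities yields that $D$ is finite-dimensional over $F$. In this finite-dimensional regime I would conclude as follows. If $D$ is commutative, then $D=F$, so $N\subseteq D^*\subseteq F$, contradicting $N\not\subseteq F$. If $D$ is non-commutative, then over an algebraic closure $D\otimes_F\overline{F}\cong\Mat_m(\overline{F})$ with $m\ge 2$; the non-central almost subnormal subgroup $N$ is Zariski-dense in $D^*$ (using again that $N$ generates $D$, that $C_D(N)=F$, and that $F$ is infinite), so the identity $w=1$ extends from $N$ to all of $D^*$ and hence to $\GL_m(\overline{F})$. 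But $\GL_m(\overline{F})$ with $m\ge 2$ satisfies no nontrivial generalized group identity, a contradiction. In either case the assumption $N\not\subseteq F$ is untenable, so $N\subseteq F$, as claimed.
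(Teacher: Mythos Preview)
The paper does not prove Proposition~\ref{pro:2.4} at all: it is quoted verbatim as \cite[Theorem~2.2]{Pa_nbh_17} and used as a black box throughout Sections~2 and~3. There is therefore no ``paper's own proof'' to compare your attempt against.

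As for the substance of your sketch, the overall strategy---reduce to a nontrivial generalized rational identity on $D$, invoke Amitsur to force $[D:F]<\infty$, then use Zariski density in the finite-dimensional case---is indeed the shape of the arguments in Golubchik--Mikhalev \cite{Pa_GoMi_82} and Tomanov \cite{Pa_To_85} for the subnormal case, and the proof in \cite{Pa_nbh_17} adapts that machinery to the almost subnormal setting. Two points in your outline are genuinely underspecified, however. First, the claim that a non-central almost subnormal $N$ satisfies $C_D(N)=F$ and generates $D$ is not as routine as ``treating the normal steps and the finite-index steps separately'': for finite-index steps one has no normality to exploit, and the usual arguments pass through the existence of free subgroups or through Stuth-type theorems, which in the almost subnormal context are themselves nontrivial (and in this very paper are \emph{deduced from} Proposition~\ref{pro:2.4}, so you must avoid circularity). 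Second, your ``propagation'' paragraph is where the real work lies, and a Vandermonde argument in one remaining variable does not by itself preserve the nontriviality condition on a generalized group monomial; the cited sources handle this with a careful induction on the length of the word and a staircase-type reduction, not a single specialization. Your high-level plan is correct, but both of these steps need the actual machinery from \cite{Pa_GoMi_82,Pa_To_85,Pa_nbh_17} rather than a sketch.
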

\begin{lemma}\label{lem:ad} Let $D$ be a division ring with finite center. Then, every non-central almost subnormal subgroup of $D^*$ contains a non-central subnormal subgroup of $D^*$. 
\end{lemma}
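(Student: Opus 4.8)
The plan is to reduce from an almost subnormal subgroup to an honestly subnormal one that is still large inside $N$, and then let the finiteness of the center force non-centrality. Concretely, I would extract from $N$ a subnormal subgroup $M$ of $D^*$ with $M\subseteq N$ and $[N:M]<\infty$, obtained by descending an almost normal series of $N$ and replacing each \emph{finite-index} step by a normal core. Once such an $M$ is in hand, the only way it could be central is to lie in the finite field $F$, and that will collapse $N$ to a finite group, a situation I will rule out.

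First I would fix an almost normal series $N=N_r<N_{r-1}<\dots<N_1<N_0=D^*$ and construct a chain $M_0\supseteq M_1\supseteq\dots\supseteq M_r$ with $M_i\le N_i$, $M_{i+1}\triangleleft M_i$, and $[N_i:M_i]<\infty$, starting from $M_0=D^*$. At a normal step ($N_{i+1}\triangleleft N_i$) I set $M_{i+1}=M_i\cap N_{i+1}$; since $M_i\le N_i$ this is normal in $M_i$, and $[N_{i+1}:M_{i+1}]\le[N_i:M_i]<\infty$ by Lemma~\ref{lem:2.2}. At a finite-index step ($[N_i:N_{i+1}]<\infty$) I instead set $M_{i+1}=\Core_{M_i}(M_i\cap N_{i+1})$; this is normal in $M_i$, and since $[M_i:M_i\cap N_{i+1}]\le[N_i:N_{i+1}]<\infty$ by Lemma~\ref{lem:2.2}, its core $M_{i+1}$ has finite index in $M_i$, so $[N_{i+1}:M_{i+1}]\le[N_i:M_i]\cdot[M_i:M_{i+1}]<\infty$. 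Then $M:=M_r\triangleleft M_{r-1}\triangleleft\dots\triangleleft M_0=D^*$ is subnormal in $D^*$, lies in $N=N_r$, and satisfies $[N:M]<\infty$.

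It remains to see that $M$ is non-central. Suppose not, so $M\subseteq F$. Because $F$ is finite, $M$ is then a finite group, and as $[N:M]<\infty$ the whole of $N$ is finite. Since the center is finite the characteristic is some prime $p$, and a finite multiplicative subgroup generates over the prime field a finite-dimensional domain, hence a finite division ring, hence (Wedderburn) a finite field; thus $N$ would be cyclic. The key input is that a torsion almost subnormal subgroup of a non-commutative division ring is central, which then gives $N\subseteq F$, contradicting the hypothesis that $N$ is non-central. Hence $M\not\subseteq F$, and $M$ is the required non-central subnormal subgroup of $D^*$ contained in $N$.

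The hard part is exactly this last ingredient: excluding finite (equivalently torsion) non-central almost subnormal subgroups. This is where the finiteness of the center is essential, since it is what converts ``$M$ is central'' into ``$N$ is finite''; over an \emph{infinite} center one cannot argue this way and must instead use the generalized-identity route of Proposition~\ref{pro:2.4}. I would supply the torsion-central fact by citing it from the theory of almost subnormal subgroups the paper already relies on, rather than reproving it inline, because a self-contained argument must propagate centrality \emph{up} the series across the normal steps, and those steps are not controlled by the elementary finite-index and conjugacy-counting estimates that handle the finite-index steps.
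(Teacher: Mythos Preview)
Your proof is correct and follows the same overall architecture as the paper's: first extract from $N$ a subnormal subgroup $M\le N$ of $D^*$ with $[N:M]<\infty$, then use finiteness of $F$ to rule out $M\subseteq F$. The two implementations differ in both halves. For the first half, the paper simply cites \cite[Proposition~3.1]{Pa_Ca_01} (Casolo--Mainardis) to obtain such an $M$, whereas you reprove that special case by descending the almost normal series and taking normal cores at the finite-index steps; your construction is correct as written (the index bookkeeping via Lemma~\ref{lem:2.2} checks out). For the second half, the paper argues that if $M\subseteq F$ then every $a\in N$ is periodic, picks a non-central $a$, passes to a centrally finite sub-division-ring $D_1\ni a$ via \cite[Proposition~2.2]{Pa_BiDu_14}, and invokes the free-subgroup theorem \cite[Theorem~4.2]{Pa_nbh_17} on $N\cap D_1^*$ for a contradiction. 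Your route is shorter: $M\subseteq F$ finite forces $N$ finite, and then you appeal to the fact that a finite almost subnormal subgroup of $D^*$ is central. That fact is exactly \cite[Lemma~5.1]{Pa_nbh_17}, which the paper itself cites later (for Theorem~\ref{th:2.9}(3)); you should name it explicitly rather than leaving the citation vague. Your Wedderburn detour showing $N$ is cyclic is correct but unnecessary: once $N$ is finite you already have what you need. In sum, your argument is more self-contained in the first step and more direct in the second, at the cost of black-boxing \cite[Lemma~5.1]{Pa_nbh_17} instead of the paper's combination of \cite{Pa_BiDu_14} and \cite[Theorem~4.2]{Pa_nbh_17}.
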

\begin{proof} Assume that $N$ is a non-central almost subnormal subgroup of $D^*$. Then, by~\cite[Proposition 3.1]{Pa_Ca_01}, $N$ contains a subgroup $H$ of finite index such that $H$ is subnormal in $D^*$. We claim that $H$ is non-central. Indeed, if $H$ is central, then $a^n\in H\subseteq F$ for every $a\in N$, where $n=[N:H]$.  Since $F$ is finite, it follows that $a$ is a periodic element. If $a$ is non-central, then by~\cite[Proposition~ 2.2]{Pa_BiDu_14}, there exists a division subring $D_1$ of $D$ such that $a\in D_1$ and $D_1$ is centrally finite. Clearly, $N_1=N\cap D_1^*$ is non-central almost subnormal subgroup of $D_1^*$ (see Lemma~\ref{lem:2.3}). According to~\cite[Theorem 4.2]{Pa_nbh_17}, $N_1$ contains a non-cyclic free subgroup, a contradiction. Hence, $a\in F$ and so $N$ is central, again a contradiction. Consequently, $H$ is a non-central subnormal subgroup of $D^*$. 
\end{proof} 
\begin{theorem}\label{th:2.5} Let $D$ be a division ring and $M, N$ be two subgroups of $D^*$. If $M$ and $N$ are non-central almost subnormal in $D^*$, then so is $M\cap N$. 
\end{theorem}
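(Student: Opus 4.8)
The plan is to separate the two assertions hidden in ``so is $M\cap N$''. Almost subnormality of $M\cap N$ is immediate from Lemma~\ref{lem:2.3}, so the entire difficulty is to prove that $M\cap N$ is \emph{non-central}. Write $F=Z(D)$ and argue by contradiction, assuming $M\cap N\subseteq F$.

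First I would reduce to subnormal subgroups by showing that each of $M,N$ already contains a non-central \emph{subnormal} subgroup of $D^*$. If $F$ is finite, Lemma~\ref{lem:ad} provides such a subgroup directly. If $F$ is infinite, I start from \cite[Proposition~3.1]{Pa_Ca_01}, which gives a subnormal $H\le M$ with $[M:H]<\infty$, and claim $H$ is automatically non-central: otherwise $a^{m}\in H\subseteq F$ for a fixed $m$ (e.g.\ $m=[M:H]!$) and all $a\in M$, so $M$ satisfies the generalized group identity $w(x)=x^{-m}d^{-1}x^{m}d=1$ over $D^*$ for any fixed $d\in D^*\setminus F$ (a genuine generalized group monomial precisely because $d\notin F$), whence Proposition~\ref{pro:2.4} forces $M\subseteq F$, a contradiction. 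Picking non-central subnormal $S_1\le M$ and $S_2\le N$, it then suffices to prove that $S_1\cap S_2$ is non-central, since $S_1\cap S_2\subseteq M\cap N$.

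The remaining core statement is: \emph{two non-central subnormal subgroups $S_1,S_2$ of $D^*$ have non-central intersection.} I would prove this by induction on the sum of the subnormal defects of $S_1$ and $S_2$ (using that intersections of subnormal subgroups are again subnormal). When both have defect one, i.e.\ $S_1,S_2\triangleleft D^*$, one has $[S_1,S_2]\subseteq S_1\cap S_2$, and the task becomes showing this commutator subgroup is non-central. For the inductive step, write $S_1\triangleleft S_1^{(1)}$ and $S_2\triangleleft S_2^{(1)}$ with the superscript denoting one step up the subnormal series. By the induction hypothesis $R=S_1\cap S_2^{(1)}$ and $R'=S_1^{(1)}\cap S_2$ are non-central (each pairing has strictly smaller defect sum). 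A direct check using $S_1\triangleleft S_1^{(1)}$ and $S_2\triangleleft S_2^{(1)}$ shows $[R',R]\subseteq S_1\cap S_2$, so everything is reduced to the same assertion: a commutator of two non-central subnormal subgroups is non-central.

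The main obstacle is therefore to rule out that two non-central subnormal subgroups commute modulo the center. If $[P,Q]\subseteq F$ for non-central subnormal $P,Q$, then for each $q\in Q$ conjugation by $q$ fixes every line $Fp$ ($p\in P$), so the division subring generated by $P$ is invariant under conjugation by $Q$; a Cartan--Brauer--Hua type theorem together with the fact that a non-central subnormal subgroup has centralizer exactly $F$ in $D$ forces this subring to be all of $D$, after which the resulting ``diagonal'' inner automorphism must be trivial, giving $Q\subseteq F$, a contradiction. I expect making this final trivialization rigorous to be the delicate point, and the cleanest route is once more to split on $Z(D)$: for infinite center, $[P,Q]\subseteq F$ produces a generalized group identity on $P$ (such as $[[x,q],d]=1$) and Proposition~\ref{pro:2.4} finishes, whereas for finite center one invokes the existence of non-cyclic free subgroups in non-central subnormal subgroups, exactly as in the proof of Lemma~\ref{lem:ad}.
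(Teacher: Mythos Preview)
Your argument is correct, but it takes a genuinely different route from the paper's. You first reduce to \emph{subnormal} subgroups by invoking \cite[Proposition~3.1]{Pa_Ca_01} together with Proposition~\ref{pro:2.4} (for infinite center) or Lemma~\ref{lem:ad} (for finite center), and then run an induction on the sum of subnormal defects, reducing everything to the assertion that $[P,Q]\not\subseteq F$ for non-central subnormal $P,Q$. The paper, by contrast, never passes to subnormal subgroups: it argues by minimality directly on the sum of the lengths of the \emph{almost normal} series of $M$ and $N$, and splits into two cases according to whether the bottom step is a normal inclusion or a finite-index inclusion. In the normal/normal case the paper uses exactly your commutator trick (with $R=M\cap N_{r-1}$, $R'=N\cap M_{s-1}$ and Proposition~\ref{pro:2.4}); in the finite-index case it uses the identity $x^{n!}\in F$ and Proposition~\ref{pro:2.4} again. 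For finite center the paper simply cites Stuth \cite[Theorem~5]{Pa_stuth_64}, which is precisely your ``core statement'', so your inductive re-derivation of it is unnecessary there (and your finite-center sketch via free subgroups, while completable, is thinner than you suggest---it is not quite ``exactly as in Lemma~\ref{lem:ad}''). What your detour \emph{buys} is noteworthy: your reduction step is already a direct proof that every non-central almost subnormal subgroup contains a non-central subnormal one, i.e.\ Theorem~\ref{th:3.3}, which the paper obtains only later as a consequence of Theorem~\ref{th:2.5} via Theorem~\ref{th:3.1} and Corollary~\ref{cor:3.2}. What the paper's approach buys is economy for Theorem~\ref{th:2.5} itself: working with almost normal series directly avoids the extra layer of reduction and handles the finite-index steps in one stroke.
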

\begin{Proof}
Let $F$ be the center of $D,$ and assume that $M$ and $N$ are non-central almost subnormal subgroups in $D^*$. If $F$ is finite, then the conclusion follows from Lemma \ref{lem:ad} and \cite[Theorem 5]{Pa_stuth_64}. Hence, we can assume that $F$ is infinite. By Lemma~\ref{lem:2.3}, $M\cap N$ is  almost subnormal in $D^*$. Deny the statement, assume that $M\cap N$ is central. Also, assume that $M$ and $N$ are two non-central almost subnormal subgroups in $D^*$ of lengths $s,r$ respectively such that $M\cap N\subseteq F$ and $r+s$ is minimal.  Let $$N=N_r < N_{r-1} < \cdots < N_1 < N_0=D^*,$$ and $$M=M_s < M_{s-1} < \cdots < M_1 < M_0=D^*$$ be almost normal series of $N$ and $M$ respectively. Because of the minimality of $r+s$ and in view of Lemma~\ref{lem:2.3}, $M\cap N_{r-1}$ and $N\cap M_{s-1}$ are two non-central almost subnormal subgroups in  $D^*$. There are two cases to examine.
	
		{\it Case 1. The case when $N$ is normal in $N_{r-1}$ and $M$ is normal in $M_{s-1}$.} Then $M\cap N$ is normal in $M\cap N_{r-1}$. Let $a\in (M\cap N_{r-1})\backslash F$. If $[a,x]=axa^{-1}x^{-1}\in F $ for any $x\in  N\cap M_{s-1}$, then $[[a,x],a]=1.$ Hence, $N\cap M_{s-1}$ satisfies a generalized group identity over $D^*$, so  $N\cap M_{s-1}$ is central by Proposition~\ref{pro:2.4}, which is a contradiction. Therefore, there exists $b\in N\cap M_{s-1}$ such that $[a,b]\not\in F.$ Since $[a,b]\in [M,M_{s-1}]\cap [N_{r-1},N]\subset M\cap N$, it follows that $M\cap N$ is non-central, which contradicts the assumption.

	{\it Case 2. The case when either $[N_{r-1}:N]$ or $[M_{s-1}:M]$ is finite}. Since the roles of $M$ and $N$ are similar, without loss of generality, we assume that $[N_{r-1}: N]<\infty$. Then $[M\cap N_{r-1}: M\cap N]$ is also finite by Lemma~\ref{lem:2.2}. Let $[M\cap N_{r-1}: M\cap N]=n.$ Then, $b^{n!}\in M\cap N\subseteq F$ for any $b\in M\cap N_{r-1}$. If we take $a\in D\backslash F$, then $M\cap N_{r-1}$ satisfies the generalized group identity $x^{n!}a^{n!}x^{-n!}a^{-n!}=1$ over $D^*$. In view of Proposition~\ref{pro:2.4}, $M\cap N_{r-1}$ is central, a contradiction.
	
	%{\it Case 2.} If $[M_{s-1}:M]$ is finite, then by the same argument as in \textit{Case 1}, we conclude that $N\cap M_{s-1}$ is central, a contradiction.

	We see that both cases lead us to a contradiction, so the proof of the theorem is now complete. 
	\end{Proof}
	
Recall that a graph is  \textit{complete} if any two its vertices are adjacent. Note that for a division ring $D$ we have $\GL_1(D)=D^*$. So, for short, we use the notation $\Delta(D^*)$ instead of $\Delta(\GL_1(D))$.
\begin{theorem}\label{th:2.9} Let $D$ be a non-commutative division ring. Then, the following statements hold:
	\begin{enumerate}
		\item The graph $\Delta(D^*)$ is complete.
		\item Let $D'=[D^*, D^*]$ be the derived subgroup of $D^*$. If $N$ is a vertex of $\Delta(D^*)$, then either  $N$ is normal in $D^*$ or $N\cap D'$ is a proper subgroup of  $D'$ which is itself a vertex of $\Delta(D^*)$. 
		\item The vertex set of $\Delta(D^*)$ contain no finite subgroups of $D^*$.
	\end{enumerate}
\end{theorem}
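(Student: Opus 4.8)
The plan is to derive all three parts from Theorem~\ref{th:2.5}, supplemented by standard facts about the derived subgroup and by the free-subgroup / generalized-identity machinery already recorded above.

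For part (1), let $M$ and $N$ be two distinct vertices of $\Delta(D^*)$; by definition each is a non-central almost subnormal subgroup of $D^*$. Theorem~\ref{th:2.5} tells us immediately that $M\cap N$ is again non-central almost subnormal, so in particular $M\cap N$ contains a non-central element and is therefore not the trivial subgroup. Hence $M\cap N\neq 1$, so $M$ and $N$ are adjacent. As this holds for every pair of distinct vertices, $\Delta(D^*)$ is complete.

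For part (2), the two ingredients I would isolate first are: (a) $D'=[D^*,D^*]$ is normal in $D^*$, hence almost subnormal; and (b) $D'$ is non-central. Fact (b) is the one place where I would invoke external input: if $D'\subseteq F$ then every multiplicative commutator is central, and a short application of the Cartan--Brauer--Hua theorem (to the field $F(a)$ for a non-central $a$, which would then be invariant under all inner automorphisms) forces $D$ to be commutative, contrary to hypothesis. Granting (a) and (b), $D'$ is itself a vertex, and I can feed $N$ and $D'$ into Theorem~\ref{th:2.5} to conclude that $N\cap D'$ is non-central almost subnormal, hence non-trivial, and proper in $D^*$ because it is contained in $N$; thus $N\cap D'$ is a vertex. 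It remains to split on normality of $N$. If $N$ is normal we are in the first alternative. If $N$ is not normal, then $N$ cannot contain $D'$, since a subgroup containing the commutator subgroup is automatically normal in $D^*$. Therefore $N\cap D'\subsetneq D'$, which gives the second alternative.

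For part (3), suppose toward a contradiction that some vertex $N$ is finite; then $N$ is a non-central, almost subnormal, torsion group with $x^{m}=1$ for all $x\in N$, where $m=|N|$. I would separate two cases according to the center $F$. If $F$ is infinite, then $N$ satisfies the generalized group identity $x^{m}=1$ over $D^*$, so Proposition~\ref{pro:2.4} forces $N\subseteq F$, contradicting non-centrality. If $F$ is finite, I would reproduce the mechanism of Lemma~\ref{lem:ad}: pick a non-central element $a\in N$, which is periodic since $N$ is finite, embed it by \cite[Proposition~2.2]{Pa_BiDu_14} in a non-commutative centrally finite division subring $D_1$ in which $a$ is non-central, observe by Lemma~\ref{lem:2.3} that $N\cap D_1^*$ is a non-central almost subnormal subgroup of $D_1^*$, and then apply \cite[Theorem~4.2]{Pa_nbh_17} to produce a non-cyclic free subgroup inside $N\cap D_1^*\subseteq N$. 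A free non-cyclic group is infinite and torsion-free, so this contradicts the finiteness of $N$. The genuinely substantive steps are the non-centrality of $D'$ in part (2) and the finite-center case of part (3); part (1) and the normal-versus-non-normal dichotomy in (2) are essentially formal once Theorem~\ref{th:2.5} is in hand. I expect the main obstacle to be confirming that the division subring $D_1$ supplied by \cite[Proposition~2.2]{Pa_BiDu_14} can be taken non-commutative with $a$ non-central in it, since that is exactly what makes $N\cap D_1^*$ non-central and lets the free-subgroup theorem of \cite{Pa_nbh_17} apply.
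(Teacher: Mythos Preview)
Your proof is correct and, for parts (1) and (2), essentially coincides with the paper's. The only cosmetic difference is in justifying that $D'$ is non-central: the paper observes that $D'\subseteq F$ would make $D^*$ solvable and then invokes Hua's theorem \cite{Pa_hua_1950}, whereas you reach the same conclusion via Cartan--Brauer--Hua. The normal/non-normal dichotomy you set up is the contrapositive of the paper's split on whether $N\supseteq D'$.

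For part (3) the approaches genuinely diverge. The paper disposes of it in one line by citing \cite[Lemma~5.1]{Pa_nbh_17}. You instead give a self-contained argument: for infinite $F$ you feed the identity $x^{|N|}=1$ into Proposition~\ref{pro:2.4}, and for finite $F$ you replay the free-subgroup mechanism from the proof of Lemma~\ref{lem:ad} to manufacture a non-cyclic free subgroup inside the finite group $N$. Your version has the virtue of using only machinery already assembled in the paper; the paper's version is shorter but adds an external dependency. The concern you flag about whether \cite[Proposition~2.2]{Pa_BiDu_14} delivers $a\notin Z(D_1)$ is reasonable, but note that the paper relies on precisely this in its own proof of Lemma~\ref{lem:ad}, so within the paper's framework you are on safe ground.
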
 

\begin{proof}
	(1) follows immediately from Theorem \ref{th:2.5}.
	
	(2) We claim that $D'$ is non-central. Indeed, if $D'$ is central, then $D^*$ is solvable, and in view of Hua's well-known result \cite{Pa_hua_1950}, $D$ is commutative, a contradiction. Assume that $N$ is a vertex of $\Delta(D^*)$, that is, $N$ is an almost subnormal subgroup in $D^*$. If $N$ contains $D'$, then $N$ is normal in $D^*$. Otherwise, $N\cap D'$ is a proper subgroup of $D'$ which is a vertex of $\Delta(D^*)$ by 
	Theorem \ref{th:2.5}. 
	
	(3) follows from \cite[Lemma~ 5.1]{Pa_nbh_17}. 
\end{proof} 

%\begin{remark}
%	As we have mentioned in Section 2, there exist many division rings whose multiplicative groups contain almost subnormal subgroups that are not subnormal. Theorem \ref{th:2.9} shows that in a non-commutative division ring $D$ with infinite center such subgroups must do not contain the derived subgroup $D'$ of $D^*$. Hence, if $N$ is such a subgroup then $N\cap D'$ is a proper subgroup of $D'$. Moreover, in view of Theorem \ref{th:2.9}(1), $N\cap D'$ is a vertex in the graph $\Delta(D^*)$. 
%\end{remark}

Concerning the case of $\Delta(\GL_n(D))$ for $n\geq 2$, the result would be stronger. To see this, we borrow the following theorem from 
\cite{Pa_nbh_17}. 
\begin{theorem}\label{th:2.7} {\rm (\cite[Theorem 3.3]{Pa_nbh_17})} Let $D$ be an infinite division ring, and let $n\geq 2$. Assume that $N$ is a non-central subgroup of $\GL_n(D)$. Then, the following conditions are equivalent:
	\begin{enumerate}
	\item  $N$ is almost subnormal in $\GL_n(D)$.
	\item $N$ is subnormal in $\GL_n(D)$.
	\item $N$ is normal in $\GL_n(D)$.
	\item $N$ contains $\SL_n(D)$.
	\end{enumerate}
\end{theorem}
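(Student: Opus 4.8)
The four conditions fall into a trivial cycle and one substantial implication, so I would first dispose of $(4)\Rightarrow(3)\Rightarrow(2)\Rightarrow(1)$. For $(4)\Rightarrow(3)$, recall that for $n\ge 2$ the group $\SL_n(D)$, viewed as the kernel of the Dieudonn\'e determinant $\GL_n(D)\to D^*/[D^*,D^*]$, is exactly the derived subgroup of $\GL_n(D)$, so $\GL_n(D)/\SL_n(D)$ is abelian; hence any subgroup containing $\SL_n(D)$ is normal. The implications $(3)\Rightarrow(2)$ and $(2)\Rightarrow(1)$ are immediate, since a normal series is an almost normal series of length one and any subnormal series is an almost normal series. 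Thus everything reduces to proving $(1)\Rightarrow(4)$: a \emph{non-central} almost subnormal subgroup $N$ of $\GL_n(D)$ contains $\SL_n(D)$.

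For this I would argue by induction on the length $r$ of an almost normal series $N=N_r<N_{r-1}<\cdots<N_0=\GL_n(D)$. Since $N=N_r$ is contained in every $N_i$ and $N$ is non-central, each $N_i$ is non-central as well. The base $N_0=\GL_n(D)$ trivially contains $\SL_n(D)$, so it suffices to establish the following one-step claim: \emph{if $N_i\supseteq \SL_n(D)$ and $N_{i+1}$ is a non-central subgroup of $N_i$ with either $N_{i+1}\triangleleft N_i$ or $[N_i:N_{i+1}]<\infty$, then $N_{i+1}\supseteq \SL_n(D)$.} The engine behind both cases is the normal subgroup structure of $\SL_n(D)$: for $n\ge 2$ and $D$ infinite, $\SL_n(D)$ is perfect and its central quotient $\SL_n(D)/Z_S$, where $Z_S=Z(\GL_n(D))\cap\SL_n(D)$, is a simple \emph{infinite} group, so every normal subgroup of $\SL_n(D)$ is either central or all of $\SL_n(D)$.

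In the normal case $N_{i+1}\triangleleft N_i$, the subgroup $N_{i+1}\cap\SL_n(D)$ is normal in $N_i$, hence normal in $\SL_n(D)\subseteq N_i$, so by the structure statement it is either $\SL_n(D)$ (and we are done) or central. If it were central, then for every $g\in\SL_n(D)\subseteq N_i$ and $x\in N_{i+1}$ the commutator $[g,x]$ lies in $N_{i+1}$ (normality, using $g\in N_i$) and in $\SL_n(D)$ (commutators have trivial Dieudonn\'e determinant), hence in the assumed central subgroup $N_{i+1}\cap\SL_n(D)\subseteq Z(\GL_n(D))$; passing to $\GL_n(D)/Z(\GL_n(D))$ this says the image of $N_{i+1}$ centralizes the image of $\SL_n(D)$, and since the centralizer of $\SL_n(D)$ in $\GL_n(D)$ is exactly the center, we would get $N_{i+1}\subseteq Z(\GL_n(D))$, contradicting non-centrality. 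In the finite-index case $[N_i:N_{i+1}]<\infty$, I would replace $N_{i+1}$ by its normal core $C=\bigcap_{g\in N_i}gN_{i+1}g^{-1}$, which is normal in $N_i$ and still of finite index. Then $C\cap\SL_n(D)$ is a normal subgroup of $\SL_n(D)$ of finite index by Lemma~\ref{lem:2.2}; since $\SL_n(D)/Z_S$ is infinite, $\SL_n(D)$ admits no central subgroup of finite index, so the structure statement forces $C\cap\SL_n(D)=\SL_n(D)$, whence $\SL_n(D)\subseteq C\subseteq N_{i+1}$. This proves the claim and closes the induction.

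The main obstacle is not the bookkeeping of the induction but importing the right classical input with the correct hypotheses: I am relying on the simplicity of $\SL_n(D)/Z_S$ and the perfectness of $\SL_n(D)$ for $n\ge 2$ over an infinite division ring, together with the identification of $\SL_n(D)$ with both the derived subgroup of $\GL_n(D)$ and the kernel of the Dieudonn\'e determinant, and on the computation that the centralizer of $\SL_n(D)$ in $\GL_n(D)$ is the center. The hypothesis that $D$ is infinite enters in two essential places: it guarantees that $\SL_n(D)/Z_S$ is infinite, so that a central subgroup of finite index is impossible, and it removes the small-field exceptions to simplicity. Everything else is a routine intersection argument of the kind already used in Lemma~\ref{lem:2.2} and Lemma~\ref{lem:2.3}.
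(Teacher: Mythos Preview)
Your argument is correct, but there is nothing to compare it to in this paper: Theorem~\ref{th:2.7} is quoted verbatim from \cite[Theorem~3.3]{Pa_nbh_17} and the paper offers no proof of its own, only the citation. What you have written is therefore a self-contained replacement for that external reference, and it succeeds. The cycle $(4)\Rightarrow(3)\Rightarrow(2)\Rightarrow(1)$ is handled exactly as one should, and your inductive descent for $(1)\Rightarrow(4)$, driven by the dichotomy ``every normal subgroup of $\SL_n(D)$ is central or everything'', is the standard and correct line.

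One small point of exposition in the normal step: from $[g,x]\in Z(\GL_n(D))$ for all $g\in\SL_n(D)$ you pass to ``the image of $N_{i+1}$ centralizes the image of $\SL_n(D)$'' and then invoke $C_{\GL_n(D)}(\SL_n(D))=Z(\GL_n(D))$. Strictly speaking that centralizer fact says $[g,x]=1\Rightarrow x\in Z$, not $[g,x]\in Z\Rightarrow x\in Z$; the bridge is that $g\mapsto[x,g]$ is then a homomorphism $\SL_n(D)\to Z(\GL_n(D))$, which is trivial because $\SL_n(D)$ is perfect. You list perfectness among your imported facts, so this is only a matter of making the use explicit. With that one clause added, the proof is complete and could stand in for the citation.
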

This theorem shows that if $D$ is an infinite division ring then for $n\geq 2$, every non-central almost subnormal subgroup of $\GL_n(D)$ is normal. Moreover, it contains the special linear group $\SL_n(D)$ which is itself a non-central normal subgroup of $\GL_n(D)$. Hence, the vertex set of $\Delta(\GL_n(D))$ consists of all non-central normal subgroups of $\GL_n(D)$, and $\Delta(\GL_n(D))$ is obviously complete. Also, the condition (4) in Theorem \ref{th:2.7} shows that the vertex set of $\Delta(\GL_n(D))$ contains no finite subgroups of $\GL_n(D)$.	We summarize this in the following theorem.
\begin{theorem}\label{th:2.8} Let $D$ be an infinite division ring and $n\geq 2$. Then, the graph $\Delta(\GL_n(D))$ is complete. Moreover, the vertex set of $\Delta(\GL_n(D))$ consists of all non-central normal subgroups of $\GL_n(D)$. Also, a subgroup $N$ of $\GL_n(D)$ is a vertex of $\Delta(\GL_n(D))$ iff $N$ contains $\SL_n(D)$.
\end{theorem}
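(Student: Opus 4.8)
The plan is to deduce Theorem~\ref{th:2.8} directly from the already-imported Theorem~\ref{th:2.7}, which does essentially all of the heavy lifting; what remains is purely graph-theoretic bookkeeping. First I would unwind the definition of the induced subgraph $\Delta(\GL_n(D))$: its vertex set is, by construction, the set of all non-central almost subnormal subgroups of $\GL_n(D)$ (these are the non-trivial proper subgroups that are almost subnormal and not contained in the center). The whole point is that under the hypotheses $D$ infinite and $n\ge 2$, Theorem~\ref{th:2.7} collapses the four conditions (almost subnormal, subnormal, normal, and containing $\SL_n(D)$) into a single class of subgroups. So the first step is to invoke the equivalence $(1)\Leftrightarrow(3)\Leftrightarrow(4)$ to identify the vertex set of $\Delta(\GL_n(D))$ with the set of non-central normal subgroups of $\GL_n(D)$, and equivalently with the set of subgroups containing $\SL_n(D)$.

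Next I would establish completeness. The key observation is that every vertex $N$ satisfies $N\supseteq \SL_n(D)$ by the equivalence $(1)\Leftrightarrow(4)$, so for any two vertices $M$ and $N$ we have $M\cap N\supseteq \SL_n(D)$. The remaining task is to check that $\SL_n(D)\ne 1$, i.e. that $\SL_n(D)$ is a genuinely non-trivial subgroup, so that $M\cap N\ne 1$ and hence $M$ and $N$ are adjacent in the intersection graph. Since $D$ is infinite and $n\ge 2$, the group $\SL_n(D)$ is visibly non-trivial (it contains all elementary transvections $I+e_{ij}(d)$ for $i\ne j$ and $d\in D$), so $M\cap N\ne 1$ for every pair of vertices, giving completeness. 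This also handles the ``Moreover'' clauses at once: the identification of the vertex set with the non-central normal subgroups comes from $(1)\Leftrightarrow(3)$, and the final ``iff'' characterization is just $(1)\Leftrightarrow(4)$.

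I do not anticipate a serious obstacle here, since Theorem~\ref{th:2.7} is quoted as a black box and reduces the statement to a triviality. The only point requiring a moment's care is the logical direction in the last sentence of the theorem: I must verify both implications of ``$N$ is a vertex iff $N\supseteq \SL_n(D)$''. The forward direction is immediate from $(1)\Rightarrow(4)$. For the converse, if $N\supseteq\SL_n(D)$ then $N$ is non-central (as $\SL_n(D)$ is non-central for $n\ge2$ over a division ring, again by exhibiting non-central transvections) and $N$ is a proper subgroup only when $N\ne\GL_n(D)$; here I would note that $\GL_n(D)$ itself is not a vertex since vertices are proper subgroups, so strictly the ``iff'' should be read as identifying vertices with the \emph{proper} non-central subgroups containing $\SL_n(D)$, and by $(4)\Rightarrow(1)$ each such subgroup is indeed almost subnormal and hence a vertex. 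Making this properness caveat explicit is the one place where I would be slightly more careful than the surrounding prose suggests.
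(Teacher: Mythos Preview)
Your proposal is correct and follows essentially the same route as the paper: the paragraph preceding Theorem~\ref{th:2.8} deduces everything from Theorem~\ref{th:2.7}, noting that every vertex contains $\SL_n(D)$ (hence any two vertices intersect non-trivially) and that the vertex set coincides with the non-central normal subgroups. Your extra care about the properness caveat and the non-triviality/non-centrality of $\SL_n(D)$ is a welcome refinement, but the overall argument is the same.
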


Denote by $V(\Delta)$ the vertex set of $\Delta=\Delta(\GL_n(D))$. In view of Theorem \ref{th:2.7}, we have
	$$\SL_n(D)=\bigcap_{N\in V(\Delta)} N.$$ 

\begin{remark} Theorem~\ref{th:2.8} says that for $n\ge 2$,  $\Delta(\GL_n(D))$ is complete if $D$ is infinite. We note that if $D$ is finite then $\Delta(\GL_n(D))$ may not be complete. Indeed, if $D$ is finite then $D=F$ is a finite field by Wedderburn's little Theorem. In this case, $\GL_n(F)$ is finite, which implies that every subgroup of $\GL_n(F)$ is of finite index in $\GL_n(F)$. Hence, every subgroup of $\GL_n(F)$ is almost subnormal in $\GL_n(F)$. It implies that $\Delta(\GL_n(F))$ is the induced subgraph of $\Gamma (\GL_n(F))$ generated by all non-central proper subgroups of $\GL_n(F)$. For instance, if $F=\Z/2\Z$ is the field of two elements, then every non-trivial subgroup of $\GL_n(\Z/2\Z)$ is non-central. Therefore, $\Delta(\GL_n(\Z/2\Z))=\Gamma(\GL_n(\Z/2\Z))$. It is easy to see (or see \cite[Section 2]{Pa_BiVi_20}), $\Delta(\GL_n(\Z/2\Z))$ has $4$ vertices and no edge. Even, by using the idea of \cite[Theorem 2.3 (2)]{Pa_BiVi_20}, we show that $\Delta(\GL_n(F))$ is not complete. To see this,  let $c=I_n+e_{12}$ and $d=I_n+e_{21}$ be two elements in $\GL_n(F)$, where $e_{ij}$ is denoted by the matrix in $M_n(F)$ in which the $(i,j)$-entry is $1$ and the other entries are $0$. Then, $C=\langle c\rangle$ and $D=\langle d\rangle$ are non-central subgroups of $\GL_n(F)$, that is, $C, D$ are vertices in $\Delta(\GL_n(F))$. Observe that $C $ and $D$ are distinct and $C\cap D=\{I_n\}$, so $\Delta(\GL_n(F))$ is not complete. 
\end{remark}
\section{The case $n=1$}

In this section, we are interested in the application of results about intersection graphs obtained in the precedent section to the study of subgroup structure of division rings. As we can see in Theorem \ref{th:2.9} and Theorem \ref{th:2.8}, if $D$ is a non-commutative division ring then the graph $\Delta(\GL_n(D))$ is complete.  Moreover, for $n\ge 2$, any its vertex is a normal subgroup of $\GL_n(D)$.  Unfortunately, this is not the case for $n=1$. In \cite{Pa_DeBiHa_19} and \cite{Pa_nbh_17}, there are the examples of division rings whose multiplicative groups contain almost subnormal subgroups that are even not subnormal. In \cite{Pa_DeBiHa_19}, the authors asked whether in $D^*$ any non-central almost subnormal subgroup contains a non-central subnormal subgroup. In fact, they posed the following question. 
\begin{question}{\rm (\cite[Question 2.7]{Pa_DeBiHa_19})}\label{question 1} Let $D$ be a division ring and $N$ an almost subnormal subgroup of $D^*$. If $N$ is non-central, then is it true that $N$ contains a non-central subnormal subgroup of $D^*$?
\end{question}
The affirmative answer to Question \ref{question 1} would give very useful tool to solve some problem on subgroup structure of $D^*$.
%In particular, the problem of the existence of  non-abelian free subgroups of non-central  almost subnormal subgroups in $D^*$ can be reduced to the case for non-central subnormal subgroups in $D^*$. 
The main aim of this section is to use the completeness of the graph $\Delta(D^*)$ to study Question~ 1, and the affirmative answer to this question will be given.  In the sequent, this fact would have a number of consequences describing subgroup structure of a division ring $D$ (some illustrative examples will be given in the next after Theorem \ref{th:3.3}).

Let $G$ be a group and $\Omega$ be a subgraph of  the intersection graph $\Gamma(G)$. Denote by $V(\Omega)$ and $E(\Omega)$ the vertex set and edge set of $\Omega$ respectively. Consider two vertices $A, B\in V(\Omega)$. If there exists in $\Omega$ a path
$$(A=A_1, A_2, \dots, A_n=B)\eqno (1)$$
such that $A_{i+1}\lhd A_i$ for $1\le i\le n-1$, then we say that (1) is a \textit{downward path} from $A$ to $B$. 

\begin{theorem}\label{th:3.1} Let $D$ be a division ring with infinite center $F$. Assume that $A$ and $B$ are two distinct vertices of the graph $\Delta(D^*)$ such that $B$ is a subgroup of $A$. Then, there is a vertex $C$ of $\Delta(D^*)$ such that $C$ is a subgroup of $B$ and there is a downward path in $\Delta(D^*)$ from $A$ to $C$.  
\end{theorem}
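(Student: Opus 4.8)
The plan is to reduce the statement to a purely order-theoretic descent along a chain of vertices joining $A$ to $B$, and then to convert each finite-index step of that chain into a genuinely normal (downward) step. Since $\Delta(D^*)$ has vertices, $D^*$ is non-abelian, so I may fix a non-central element $d\in D^*\setminus F$; this is the only extra input I need beyond the hypothesis that $F$ is infinite.

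First I would manufacture a descending chain of vertices from $A$ to $B$. Fix an almost normal series $B=B_s<B_{s-1}<\cdots<B_1<B_0=D^*$ witnessing that $B$ is almost subnormal, and set $C_j=B_j\cap A$ for $0\le j\le s$. Then $C_0=A$ and $C_s=B$ (as $B\subseteq A$), and every $C_j$ contains $B$, hence is non-central; each $B_j$ is almost subnormal in $D^*$ (truncate the series), so each $C_j$ is almost subnormal by Lemma~\ref{lem:2.3} and is therefore a vertex of $\Delta(D^*)$. By Lemma~\ref{lem:2.2} and the stability of normality under intersection, each step $C_j\ge C_{j+1}$ is either normal ($C_{j+1}\lhd C_j$) or of finite index. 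Deleting repetitions, I obtain a chain $A=C_0>C_1>\cdots>C_s=B$ of vertices in which every step is normal or of finite index; note that each descent here lands in a non-central, hence nontrivial, proper subgroup, so consecutive distinct terms intersect nontrivially and are automatically adjacent in $\Delta(D^*)$.

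The heart of the argument is a lemma I would prove by induction on the number $s$ of steps: \emph{if $A=C_0\ge C_1\ge\cdots\ge C_s$ is a chain of non-central almost subnormal subgroups of $D^*$ in which each step is normal or of finite index, then there is a downward path in $\Delta(D^*)$ from $A$ to some vertex $C\subseteq C_s$.} In the inductive step I examine the top step $C_0\ge C_1$. If $C_1\lhd C_0$, then $C_1\lhd A$ is a genuine downward edge, and I apply the induction hypothesis to the tail $C_1\ge\cdots\ge C_s$ and prepend this edge. If instead $[C_0:C_1]$ is finite, I replace $C_1$ by its core $K=\Core_A(C_1)$, which is normal in $A$, of finite index, and contained in $C_1$, and pass to the chain $K=K\cap C_1\ge K\cap C_2\ge\cdots\ge K\cap C_s$: its terms are almost subnormal (Lemma~\ref{lem:2.3}) and non-central (each contains $K\cap C_s$, which is non-central almost subnormal by Theorem~\ref{th:2.5}), and its steps are normal or of finite index (Lemma~\ref{lem:2.2}). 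Applying the induction hypothesis to this chain of $s-1$ steps and prepending the edge $K\lhd A$ finishes the descent, the endpoint lying in $K\cap C_s\subseteq C_s$.

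The step I expect to be the main obstacle — and the only place the infinitude of $F$ enters — is checking that the core $K=\Core_A(C_1)$ is non-central, so that it is a genuine vertex. Here I would argue by contradiction: if $K\subseteq F$, then since $K\lhd A$ with $m:=[A:K]<\infty$ the finite quotient $A/K$ forces $a^m\in K\subseteq F$ for every $a\in A$; with the fixed non-central $d$, the relation $x^m\,d\,x^{-m}d^{-1}=1$ is then a nontrivial generalized group identity satisfied by $A$ over $D^*$, so Proposition~\ref{pro:2.4} would make $A$ central, a contradiction. Granting the lemma, I apply it to the chain $C_j=B_j\cap A$ to obtain a downward path from $A$ to a vertex $C\subseteq C_s=B$; since $C\subseteq B\subsetneq A$ this path has at least one edge, which is exactly the assertion of the theorem.
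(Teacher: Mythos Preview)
Your proposal is correct and follows essentially the same approach as the paper. Both arguments induct on the length of an almost normal series from $B$ to $A$, convert each finite-index step into a normal one via the core, invoke Proposition~\ref{pro:2.4} to show that these cores remain non-central, and invoke Theorem~\ref{th:2.5} to keep intersections non-central. The only cosmetic differences are that you spell out explicitly how to obtain the series inside $A$ by intersecting an almost normal series in $D^*$ with $A$ (the paper just asserts that $B$ is almost subnormal in $A$), and that you process the series top-down while the paper works bottom-up; the ingredients are identical.
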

\begin{Proof}   Since $B$ is almost subnormal in $D^*$, it is also almost subnormal in $A$. Suppose that $$B=N_r< N_{r-1}< \cdots < N_1< N_0=A$$ is an almost normal series of length $r$. Observe that $n\geq 1$ because $B$ is a proper subgroup of $A$. We show by induction on $r$ that $B$ contains a vertex $C$ such that there is a downward path in  $\Delta(D^*)$ from $A$ to $C$. Assume that $r=1$, that is, either $B$ is normal in $A$ or $[A:B]<\infty$. If $B$ is normal in $A$, then $(A,B)$ is a downward path in  $\Delta(D^*)$ from $A$ to $B$, so we take $C=B$. If $[A:B]<\infty$, then $\Core_A(B)$ is  a normal subgroup of finite index in $A$, say  $[A:\Core_A(B)]=\ell<\infty$. If $\Core_A(B)$ is contained in $F$, then $x^\ell y^\ell x^{-\ell} y^{-\ell}=1$ is a group identity of $A$, hence $A\subseteq F$ by Proposition~\ref{pro:2.4}, which contradicts the fact that $A$ is a vertex of $\Delta(D^*)$. Therefore, $\Core_A(B)$ is a vertex of $\Delta(D^*)$,  and we take $C=\Core_A(B)$. Now, assume that $r>1$ and  there exists a downward path in $\Delta(D^*)$  from $A$ to some vertex  $M_{r-1}$ which is a subgroup of $N_{r-1}$, say $(A, \dots, M_{r-1})$. Setting $M_r=M_{r-1}\cap N_r$, by Theorem~\ref{th:2.5}, $M_r=M_{r-1}\cap N_r$ is a non-central almost subnormal subgroup in $D^*$, that is, $M_r$ is a vertex of $\Delta(D^*)$. We claim that $M_r$ contains a subgroup $C$ such that $C$ is a vertex in $\Delta(D^*)$ and  $(A, \dots, M_{r-1}, C)$ is a downward path in $\Delta(D^*)$ from $A$ to $C$. Indeed, if $N_r$ is normal in $N_{r-1}$, then $M_{r}$ is normal in $M_{r-1}\cap N_{r-1}=M_{r-1}$, and it suffices to take $C=M_{r-1}$.  Now,  assume that $[N_{r-1}:N_r]=n<\infty$. Then,  $[M_{r-1}:M_r]=[N_{r-1}\cap M_{r-1}:N_r\cap M_{r-1}]\le [N_{r-1}:N_r]=n$ by Lemma~\ref{lem:2.2}, and it follows that  $\Core_{M_{r-1}}(M_r)$ is a normal subgroup of finite index in $M_{r-1}$, say $[M_{r-1}: \Core_{M_{r-1}}(M_r)]=m<\infty$. If $\Core_{M_{r-1}}(M_r)$ is central, then $M_{r-1}$ satisfies a group identity $x^{m}y^{m}x^{-m}y^{-m}=1$. Moreover, $M_{r-1}$ is almost subnormal in $D^*$ since $M_{r-1}$ is almost subnormal in $A$ and $A$ is almost subnormal in $D^*$. By Proposition~\ref{pro:2.4}, $M_{r-1}$ is central which is a contradiction. Hence, $\Core_{M_{r-1}}(M_r)$ is non-central, and we may take $C=\Core_{M_{r-1}}(M_r)$. The claim is shown, hence  the proof of the theorem is now complete.
\end{Proof}

From Theorem \ref{th:3.1}, it follows immediately the following corollary.

\begin{corollary}\label{cor:3.2} Let $D$ be a division ring with infinite center. Assume that $A$ is a non-central almost subnormal subgroup in $D^*$. If $A$ contains a proper non-central almost subnormal subgroup in $D^*$, then there exists a non-central subgroup $C$ of $D^*$ which is subnormal in $A$. 
\end{corollary}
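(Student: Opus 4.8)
The plan is to obtain the corollary directly from Theorem~\ref{th:3.1} by feeding it the two vertices it requires. By hypothesis $A$ contains a \emph{proper} non-central almost subnormal subgroup of $D^*$; denote it by $B$. Since $B$ is non-central it is non-trivial, and since $B\subsetneq A\subseteq D^*$ it is a proper subgroup of $D^*$, so $B$ is a vertex of $\Delta(D^*)$. Assuming first that $A\ne D^*$, the subgroup $A$ is likewise a proper (non-trivial, because non-central) almost subnormal subgroup of $D^*$, hence a vertex of $\Delta(D^*)$; and $A\ne B$ with $B\le A$. Thus $(A,B)$ is exactly an instance of the hypotheses of Theorem~\ref{th:3.1}.

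Applying Theorem~\ref{th:3.1} to this pair yields a vertex $C$ of $\Delta(D^*)$ with $C\subseteq B\subseteq A$ and a downward path
$$(A=A_1,A_2,\dots,A_m=C)$$
in $\Delta(D^*)$, meaning $A_{i+1}\lhd A_i$ for each $1\le i\le m-1$. It remains only to unwind two definitions. First, being a vertex of $\Delta(D^*)$ means precisely that $C$ is a non-central subgroup of $D^*$, which supplies the required non-centrality. Second, a downward path is by definition a subnormal series: the chain $C=A_m\lhd A_{m-1}\lhd\cdots\lhd A_1=A$ exhibits $C$ as a subnormal subgroup of $A$. Together these give a non-central $C$ that is subnormal in $A$, which is the assertion.

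I do not anticipate a genuine obstacle, since all the real work is already carried by Theorem~\ref{th:3.1}; the corollary only translates ``vertex'' into ``non-central'' and ``downward path'' into ``subnormal''. The single point meriting a line of care is the degenerate case $A=D^*$, in which $A$ is not literally a vertex. Here $D$ must be non-commutative, so $A=D^*\not\subseteq F$, and the inductive construction in the proof of Theorem~\ref{th:3.1} goes through verbatim with $D^*$ taken as the top of the path (the base step only used that the top is non-central, i.e.\ not contained in $F$); this again produces a non-central $C\subseteq B$ subnormal in $D^*$, completing the argument.
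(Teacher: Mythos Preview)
Your proof is correct and follows exactly the paper's approach, which simply records that the corollary is immediate from Theorem~\ref{th:3.1}. Your extra care with the degenerate case $A=D^*$ (where $A$ is not literally a vertex of $\Delta(D^*)$) goes beyond the paper's one-line justification; note that this case could also be dispatched more cheaply by taking $C=[D^*,D^*]$, which is non-central and normal in $D^*$.
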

Now, we are ready to give the affirmative answer to Question 1.
\begin{theorem}\label{th:3.3}
Let $D$ be a division ring. Then, every  non-central almost subnormal subgroup of $D^*$ contains a non-central subnormal subgroup of $D^*$.
\end{theorem}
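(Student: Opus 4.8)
The plan is to split into two cases according to whether the center $F$ of $D$ is finite or infinite, and to reduce the infinite case to Corollary~\ref{cor:3.2}.

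Let $N$ be a non-central almost subnormal subgroup of $D^*$. First I would dispose of the finite-center case immediately: if $F$ is finite, then Lemma~\ref{lem:ad} already gives exactly the desired conclusion, so nothing more is needed there. Hence I may assume $F$ is infinite, and the whole difficulty is concentrated in this case.

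\textbf{The infinite-center case.} Here I want to apply Corollary~\ref{cor:3.2}, which says that if a non-central almost subnormal subgroup $A$ \emph{contains a proper non-central almost subnormal subgroup}, then $A$ contains a non-central subgroup $C$ that is subnormal in $A$. The obstacle is that subnormality of $C$ \emph{in $A$} is weaker than subnormality in $D^*$, and moreover Corollary~\ref{cor:3.2} requires $N$ to contain a proper non-central almost subnormal piece to get started. So the natural strategy is to take $A=N$ and look for a proper non-central almost subnormal subgroup sitting inside $N$. The canonical candidate is the intersection $N\cap D'$ with the derived subgroup $D'=[D^*,D^*]$: by Theorem~\ref{th:2.9}(2), either $N$ is normal in $D^*$ (in which case $N$ itself is subnormal in $D^*$ and we are done), or $N\cap D'$ is a proper non-central almost subnormal subgroup and itself a vertex of $\Delta(D^*)$. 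In the latter case Corollary~\ref{cor:3.2} applies with $A=N$ and produces a non-central $C$ subnormal in $N$.

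\textbf{The main obstacle, and how to clear it.} What remains is to upgrade ``$C$ subnormal in $N$'' to ``some non-central subnormal subgroup of $D^*$ lying inside $N$.'' The key point is that subnormality composes: a subnormal chain from $N$ down to $C$, spliced onto an almost-normal series from $D^*$ down to $N$, gives a chain from $D^*$ to $C$, but the top part is only \emph{almost} normal, so $C$ need not be subnormal in $D^*$. To finish I would feed $C$ back into Theorem~\ref{th:3.1}: apply that theorem to the pair $A=D^*$ (or rather to a vertex between them) and $B=C$ to obtain a \emph{downward} path in $\Delta(D^*)$ from $D^*$ down to some non-central vertex $C'\le C\le N$, where every step is genuinely normal in the previous one. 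A downward path from $D^*$ is exactly a subnormal series, so $C'$ is a non-central subnormal subgroup of $D^*$ contained in $N$, which is what we want. The one thing to verify carefully is that Theorem~\ref{th:3.1} is applicable, i.e.\ that $C$ (or $N\cap D'$) is genuinely a proper vertex strictly below $D^*$ so that the induction on series length in Theorem~\ref{th:3.1} has positive length; this is guaranteed because $N$ is non-central and hence proper, and $D'$ is non-central by Hua's theorem as recorded in the proof of Theorem~\ref{th:2.9}(2).
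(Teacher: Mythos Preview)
Your plan has the right skeleton but contains a genuine gap and an unnecessary detour, both in the infinite-center case.

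The gap is a misreading of Theorem~\ref{th:2.9}(2): that result says $N\cap D'$ is a proper subgroup \emph{of $D'$}, not of $N$. If $N\subseteq D'$ (which is perfectly possible for a non-normal almost subnormal $N$), then $N\cap D'=N$ and your invocation of Corollary~\ref{cor:3.2} with $A=N$ has no proper witness. Even when $N\cap D'\subsetneq N$, that first application buys you nothing, as you yourself note: it only yields $C$ subnormal in $N$. The real work is the ``upgrade'' step, which you leave as ``$A=D^*$ (or rather to a vertex between them)''. But $D^*$ is not a vertex of $\Delta(D^*)$, and you never name the vertex you intend. The point is that Theorem~\ref{th:3.1} gives subnormality \emph{in $A$}; to land subnormality in $D^*$ you must choose $A$ already \emph{normal} in $D^*$.

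The fix is to use the normal anchor you already introduced: take $A=D'$ and $B=N\cap D'$ directly in Theorem~\ref{th:3.1}. By Theorem~\ref{th:2.9}(2), in the non-normal case $B$ is a proper non-central vertex contained in the vertex $D'$, so Theorem~\ref{th:3.1} produces a non-central $C\subseteq N\cap D'\subseteq N$ with a downward path from $D'$ to $C$, i.e.\ $C$ is subnormal in $D'$ and hence in $D^*$. No preliminary pass through Corollary~\ref{cor:3.2} with $A=N$ is needed. This is exactly the paper's argument, except that the paper uses $N_1$ (when $N_1\trianglelefteq D^*$) or $\mathrm{Core}_{D^*}(N_1)$ (when $[D^*:N_1]<\infty$) as the normal anchor instead of $D'$; your choice of $D'$ is an equally valid and arguably cleaner variant once the application is aimed correctly.
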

\begin{Proof}
Assume that $N$ is a non-central almost subnormal subgroup in $D^*$ with an almost normal series 
$$N=N_r< N_{r-1}<\cdots < N_1< N_0=D^*.$$ 
Let $F$ be the center of $D$. If $F$ is finite, then by Lemma \ref{lem:ad}, $N$ contains a non-central subnormal subgroup of $D^*$. Hence, we can assume that $F$ is infinite. If $r=0$, then there is nothing to prove. Assume that $r\ge 1$. Clearly,  $ N_1,\dots, N_r$ are all vertices in $\Delta(D^*)$. 

\textit{Case 1: $N_1$ is normal in $D^*$.} 

If $r=1$, then $N=N_1$ is normal in $D^*$ and there is  nothing to do. Assume that $r>1$, so $N$ is a proper subgroup of $N_1$. Then, in view of 
Corollary~\ref{cor:3.2}, there exists a proper subgroup $M$ of $N$ which is subnormal in $N_1$. Consequently, $M$ is a non-central subnormal subgroup of $D^*$ which is contained in $N$.

\textit{Case 2:  $[D^*:N_1]<\infty$.}

Then, $\Core_{D^*}(N_1)$ is normal in $D^*$ of finite index, say $[D^*: \Core_{D^*}(N_1)]=m$. If $\Core_{D^*}(N_1)$ is central, then $x^my^mx^{-m}y^{-m}=1$ for any $x,y\in D^*$. Hence, in view of Proposition~\ref{pro:2.4}, $D^*$ is commutative, a contradiction. Therefore, $\Core_{D^*}(N_1)$ is a non-central subgroup, so it is  a vertex of $\Delta(D^*)$. If $\Core_{D^*}(N_1)\cap N=\Core_{D^*}(N_1)$, then $\Core_{D^*}(N_1)$ is a normal subgroup of $D^*$ which is contained in $N$. If  $\Core_{D^*}(N_1)\cap N<\Core_{D^*}(N_1)$, then, according to Corollary~\ref{cor:3.2}, there exists a non-central subnormal subgroup $M$ in  $\Core_{D^*}(N_1)$ such that $M$ is subnormal in $D^*$.

The proof of the theorem is now complete. 
\end{Proof}

%Basing on previous results we can get a number of results on the subgroup structure of division rings as corollaries of 

For division rings, Theorem \ref{th:3.3} allows us to reduce various problems on almost subnormal subgroups to problems of subnormal subgroups. As a good illustration, we note the following problem. Let $D$ be a division ring with center $F$ and the multiplicative group $D^*$. A well-known result due to Hua \cite{Pa_hua_1950} states that if $D^*$ is solvable, then $D$ is a field. Later, in 1964, Stuth \cite{Pa_stuth_64} generalized Hua's result by proving that if a subnormal subgroup $N$ of $D^*$ is solvable, then $N$ must be contained in $F$. In \cite[Theorem 2.4]{Pa_HaTh_09}, it was proved that if $N$ is a locally solvable subnormal subgroup of $D^*$, then $N$ is central provided $D$ is algebraic over its center $F$. Later, it was conjectured \cite[Conjecture 1]{Pa_HaTh_13} that if a subnormal subgroup $N$ of $D^*$ is locally solvable, then $N$ must be central. From previous works we can see that this conjecture has the affirmative answer for the case when $N$ is  a locally nilpotent subgroup of $D^*$ (see \cite{Pa_huz_1960}) and for the case when $N$ is a locally solvable normal subgroup of $D^*$ (see \cite{Pa_Zales_1965}). In a recent work, Danh and Khanh completely solved this conjecture by proving the following theorem.

\begin{theorem}\label{dk}{\rm \cite[Theorem 2.5]{Pre_dk_20}} Let $D$ be a division ring with center $F$ and $N$ a subnormal subgroup of the multiplicative subgroup $D^*$ of $D$. If $N$ is locally solvable, then $N$ is central, that is, $N\subseteq F$. 
\end{theorem}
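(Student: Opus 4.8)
I would begin from the contrapositive: a non-central subnormal subgroup $N$ of $D^*$ cannot be locally solvable. The observation driving the whole argument is that a non-cyclic free group is finitely generated and not solvable, so a locally solvable group can contain no non-cyclic free subgroup. Thus it suffices to prove the following \emph{free subgroup statement}: if $N$ is a non-central subnormal subgroup of the multiplicative group of a noncommutative division ring, then $N$ contains a non-cyclic free subgroup. Once this is in hand, local solvability of a non-central $N$ yields an immediate contradiction, forcing $N\subseteq F$.

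My plan is first to dispose of the easy reductions and then to attack the free subgroup statement by reducing to a centrally finite division ring. If the center $F$ is finite, then the argument already used in Lemma~\ref{lem:ad}, via \cite[Theorem~4.2]{Pa_nbh_17}, shows that every non-central almost subnormal subgroup contains a non-cyclic free subgroup, and we are done; so I would assume $F$ infinite. If $N$ were abelian it would be solvable, whence central by Stuth's theorem \cite[Theorem~5]{Pa_stuth_64}; so I may assume $N$ noncommutative and fix $a,b\in N$ with $ab\neq ba$. Let $K$ be the division subring of $D$ generated by $F\cup\{a,b\}$. By Lemma~\ref{lem:2.3} and Theorem~\ref{th:2.5} the group $N_K:=N\cap K^*$ is a non-central almost subnormal subgroup of $K^*$, and it is a subgroup of $N$, hence locally solvable. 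It therefore suffices to produce a non-cyclic free subgroup inside $N_K$.

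The heart of the proof, and the step I expect to be the main obstacle, is the production of a non-cyclic free subgroup in $N_K$ for a noncommutative $K$ with infinite center. When $K$ is centrally finite one can invoke the free subgroup theorems available for finite-dimensional division algebras, in the spirit of \cite[Theorem~4.2]{Pa_nbh_17}, applied to the non-central almost subnormal subgroup $N_K$. The genuinely hard case is when $K$ is centrally infinite: here the plan is to use valuation- or specialization-theoretic techniques to transfer the problem to a centrally finite residue or subring while preserving both non-centrality and the almost subnormal structure (the latter controlled through Lemma~\ref{lem:2.2} and Theorem~\ref{th:2.5}), and then to lift a free pair back to $N_K$. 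This is precisely where generalized group identities no longer suffice: local solvability imposes no uniform derived length, so Proposition~\ref{pro:2.4} cannot be invoked directly, and one must instead exhibit free generators explicitly. With a free subgroup produced in every case, local solvability is contradicted and the theorem follows.
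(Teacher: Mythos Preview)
The paper does not give a proof of this theorem at all: it is quoted verbatim from \cite[Theorem~2.5]{Pre_dk_20} and used as a black box. So there is nothing in the paper to compare your argument against; the question is simply whether your proposal stands on its own.

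It does not. Your strategy is to deduce the theorem from the stronger \emph{free subgroup statement} (every non-central subnormal subgroup of $D^*$ contains a non-cyclic free group), and you correctly note that the centrally finite case is handled by results such as \cite[Theorem~4.2]{Pa_nbh_17}. But the passage to a finitely generated division subring $K=F(a,b)$ does not make $K$ centrally finite, and for the centrally infinite case you only gesture at ``valuation- or specialization-theoretic techniques'' without carrying anything out: there is no construction of a suitable valuation, no argument that non-centrality and subnormality survive passage to a residue division ring, and no mechanism for lifting a free pair back to $N_K$. This is precisely the substance of the problem, not a routine step, and in fact the free subgroup statement you are trying to prove is a form of Lichtman's conjecture for subnormal subgroups, which is widely regarded as at least as hard as the locally solvable question you started with. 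In short, you have reduced the theorem to something no easier, and then left that something unproved. A genuine proof along the lines of \cite{Pre_dk_20} requires a direct attack on local solvability rather than a detour through free subgroups.
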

In view of Theorem \ref{th:3.3}, it is easy to carry over the results  for subnormal subgroups to the results for almost subnormal subgroups. For example, the following result is an immediate corollary of Theorem \ref{th:3.3} and Theorem \ref{dk}.

\begin{corollary}\label{cor:4.1}
	Let $D$ be a division ring and $N$  an almost subnormal subgroup of $D^*$. If $N$ is locally solvable, then $N$ is central. $\square$
\end{corollary}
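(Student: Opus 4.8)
The plan is to argue by contradiction, combining the two results that immediately precede the statement. Suppose, for the sake of contradiction, that $N$ is a locally solvable almost subnormal subgroup of $D^*$ that is \emph{not} central. Then $N$ is a non-central almost subnormal subgroup, so Theorem~\ref{th:3.3} applies directly and furnishes a non-central subnormal subgroup $M$ of $D^*$ with $M\subseteq N$.

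The next step is to transfer local solvability from $N$ down to $M$. This is the only routine verification in the argument: since $M$ is a subgroup of $N$, every finitely generated subgroup of $M$ is in particular a finitely generated subgroup of $N$, hence solvable by hypothesis; thus $M$ is itself locally solvable. Now $M$ is a subnormal subgroup of $D^*$ that is locally solvable, so Theorem~\ref{dk} forces $M\subseteq F$, i.e. $M$ is central. This contradicts the fact that $M$ was chosen to be non-central, and the contradiction shows that $N$ must be central, completing the proof.

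There is essentially no genuine obstacle here, which is exactly why the statement is labelled an immediate corollary: the real content has already been absorbed into Theorem~\ref{th:3.3} (the passage from almost subnormal to subnormal) and Theorem~\ref{dk} (the solvability dichotomy for subnormal subgroups). The only point one must not skip is the observation that local solvability is inherited by subgroups, so that the subnormal subgroup $M$ produced by Theorem~\ref{th:3.3} still satisfies the hypothesis of Theorem~\ref{dk}. Once these two inputs are in place, the contradiction is immediate.
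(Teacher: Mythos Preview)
Your argument is correct and is exactly the intended one: the paper presents this corollary as an immediate consequence of Theorem~\ref{th:3.3} and Theorem~\ref{dk}, and your contradiction argument spells out precisely that deduction, including the routine check that local solvability passes to the subgroup $M$.
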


By the same way, one can get several other results for almost subnormal subgroups in division rings. Here, we list only as examples two results. In both the two following corollaries, $D$ is a division ring with center $F$, and $N$ is an almost subnormal subgroup of the multiplicative group $D^*$ of $D$. 

\begin{corollary}\label{cor:4.10} Let $D$ be a division ring with uncountable center $F.$ If $N$ is radical over $F,$ then $N$ is central.
\end{corollary}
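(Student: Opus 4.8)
The plan is to reduce the almost-subnormal case to the already-settled subnormal case by means of Theorem~\ref{th:3.3}, exactly as Corollary~\ref{cor:4.1} was deduced from Theorem~\ref{th:3.3} together with Theorem~\ref{dk}. Recall that $N$ being \emph{radical over $F$} means that for every $a\in N$ there is a positive integer $m=m(a)$ with $a^m\in F$. I would argue by contradiction: suppose $N$ is non-central. Being a non-central almost subnormal subgroup of $D^*$, $N$ contains, by Theorem~\ref{th:3.3}, a non-central \emph{subnormal} subgroup $M$ of $D^*$.

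The next, elementary, step is to observe that the property of being radical over $F$ is inherited by subgroups: since $M\subseteq N$ and each element of $N$ has a power lying in $F$, the same holds for each element of $M$, so $M$ is radical over $F$ as well. Thus $M$ is a non-central subnormal subgroup of $D^*$ that is radical over the uncountable center $F$, which is precisely the configuration handled by the subnormal theory.

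The final step is to invoke the known result for subnormal subgroups — the subnormal analogue playing here the role that Theorem~\ref{dk} plays for Corollary~\ref{cor:4.1} — which asserts that a subnormal subgroup of $D^*$ radical over an uncountable center $F$ must be central. Applied to $M$, this forces $M\subseteq F$, contradicting the non-centrality of $M$; hence $N\subseteq F$, as claimed. I expect no genuine obstacle internal to this argument, since all the difficulty has been absorbed into Theorem~\ref{th:3.3} and into the cited subnormal result. The one point deserving care is the \emph{uncountability} hypothesis on $F$: it is exactly what the cited subnormal theorem requires (uncountability being essential in results of radical-extension type), so I would verify that the cited result is indeed stated under this hypothesis rather than mere infiniteness, which is what guarantees that the hypothesis in the statement is precisely tuned to make the reduction go through.
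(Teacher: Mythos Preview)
Your proposal is correct and follows exactly the paper's approach: the paper's proof is the one-line ``By Theorem~\ref{th:3.3} and \cite[Theorem 2]{Pa_He_80}'', and you have simply unpacked this reduction (Theorem~\ref{th:3.3} to pass to a non-central subnormal $M\subseteq N$, inheritance of the radical-over-$F$ property, then Herstein's theorem for subnormal subgroups under the uncountable-center hypothesis).
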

\begin{Proof} 
	By Theorem~\ref{th:3.3} and \cite[Theorem 2]{Pa_He_80}.
\end{Proof}
%\begin{corollary}\label{cor:4.11}
%	$|a^N|=|D|$ for every $a\in D\backslash F.$
%\end{corollary}
%\begin{Proof} This is the consequence of Theorem~\ref{th:3.3} and \cite[Theorem 3.1]{Pa_HaTu_16}.
%\end{Proof}

%Recall that a group $G$ is said to be a $FC-$\textit{group} if every conjugacy class of $G$ is finite.
%\begin{corollary}\label{cor:4.12} If $N$ is an $FC-$ group, then $N$ is central.
%\end{corollary}
%\begin{Proof} This is the consequence of Theorem~\ref{th:3.3} and \cite[Theorem 3.1]{Pa_HaTh_08}.
%\end{Proof}
%\begin{corollary}\label{cor:4.13}  If $N$ is locally nilpotent, then $N$ is central.
%\end{corollary}
%\begin{Proof} This is the consequence of Theorem~\ref{th:3.3} and \cite[Theorem 2.2]{Pa_HaTh_09}.
%\end{Proof}
\begin{corollary}\label{cor:4.14} If $N$ is solvable-by-locally finite, then $N$ is central.
\end{corollary}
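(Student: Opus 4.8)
The plan is to run the same reduction-to-subnormal strategy that underlies Corollary~\ref{cor:4.1} via Theorem~\ref{th:3.3}, but to replace the locally solvable input by a two-step argument: first peel off a central solvable part using Stuth's theorem, then show that the resulting (now periodic) derived subgroup is forced to be central by mimicking Lemma~\ref{lem:ad}. Concretely, I argue by contradiction. Suppose $N$ is non-central. By Theorem~\ref{th:3.3}, $N$ contains a non-central subnormal subgroup $M$ of $D^*$. The class of solvable-by-locally finite groups is closed under passage to subgroups: if $S$ is a solvable normal subgroup of $N$ with $N/S$ locally finite, then $S\cap M$ is a solvable normal subgroup of $M$ and $M/(S\cap M)\cong MS/S\le N/S$ is locally finite. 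Writing $S_M=S\cap M$, the subgroup $M$ is again solvable-by-locally finite.

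First I would show that $S_M$ is central. Since $S_M$ is normal in $M$ and $M$ is subnormal in $D^*$, $S_M$ is a solvable subnormal subgroup of $D^*$; hence $S_M\subseteq F$ by Stuth's theorem \cite[Theorem~5]{Pa_stuth_64}. Next I would deduce that the derived subgroup $M'=[M,M]$ is locally finite, and in particular periodic. For any finitely generated subgroup $G\le M$, the index $[G:G\cap S_M]$ is finite, because $G$ maps onto a finitely generated, hence finite, subgroup of the locally finite group $M/S_M$; moreover $G\cap S_M\le Z(G)$ since $S_M\subseteq F$. Thus $[G:Z(G)]<\infty$, so by Schur's theorem $[G,G]$ is finite. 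As every finitely generated subgroup of $M'$ lies in some such $[G,G]$, the group $M'$ is locally finite.

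The heart of the argument, and the step I expect to be the main obstacle, is to upgrade local finiteness of $M'$ to centrality. Here I would take any non-central element $a\in M'$; since $M'$ is periodic, $a$ is a periodic non-central element, so by \cite[Proposition~2.2]{Pa_BiDu_14} there is a centrally finite division subring $D_1$ of $D$ containing $a$ in which $a$ remains non-central. Then $M'\cap D_1^*$ is a non-central subnormal subgroup of $D_1^*$, obtained by intersecting a subnormal series of $M'$ in $D^*$ with $D_1^*$, so by \cite[Theorem~4.2]{Pa_nbh_17} it contains a non-cyclic free subgroup. But $M'\cap D_1^*\le M'$ is locally finite, a contradiction. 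Hence $M'$ has no non-central element, i.e.\ $M'\subseteq F$. The delicate checks are precisely those carried out in Lemma~\ref{lem:ad}: that $a$ is genuinely non-central inside the centrally finite subring $D_1$, and that intersecting with $D_1^*$ preserves subnormality.

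Finally, $M'\subseteq F$ forces $M$ to be nilpotent of class at most two, in particular solvable; being also subnormal in $D^*$, the subgroup $M$ is central by \cite[Theorem~5]{Pa_stuth_64}. This contradicts the choice of $M$ as non-central. Therefore $N$ must be central, which completes the proof.
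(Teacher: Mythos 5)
Your proof is correct, but it takes a genuinely different route from the paper. The paper's entire proof is a two-citation reduction: Theorem~\ref{th:3.3} passes from the almost subnormal $N$ to a non-central subnormal subgroup, and then \cite[Lemma 3.1]{Pa_HaTu_16} is invoked as a black box, since that lemma already states that a solvable-by-locally finite subnormal subgroup of $D^*$ is central. You perform the same first reduction via Theorem~\ref{th:3.3}, but then re-prove the subnormal-case input from scratch: closure of the class under subgroups, Stuth's theorem \cite{Pa_stuth_64} to centralize $S\cap M$, Schur's theorem to get local finiteness of $M'=[M,M]$ (your reduction of an arbitrary finitely generated subgroup of $M'$ to a subgroup of some $[G,G]$ with $G\le M$ finitely generated is correct), and then the torsion machinery of Lemma~\ref{lem:ad} --- \cite[Proposition 2.2]{Pa_BiDu_14} plus the free-subgroup theorem \cite[Theorem 4.2]{Pa_nbh_17} --- to force $M'\subseteq F$, after which $M$ is metabelian and Stuth applies again. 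The two delicate points you flag are real but are exactly the ones the paper's own Lemma~\ref{lem:ad} already relies on: that $D_1$ can be chosen centrally finite with $a$ non-central in it, and that intersecting a subnormal series with $D_1^*$ termwise preserves subnormality (it does, since $M'$ is characteristic in $M$ and normality is preserved under intersection with a fixed subgroup at each step); note also that your application is legitimate without any finiteness assumption on $F$, because you obtain periodicity of $a$ from local finiteness of $M'$ rather than from finiteness of the center as in Lemma~\ref{lem:ad}. What each approach buys: the paper's proof is one line and defers to the literature, while yours is self-contained modulo tools already deployed elsewhere in this paper, and in effect rederives the relevant special case of \cite[Lemma 3.1]{Pa_HaTu_16} --- at the cost of length, and with the minor caveat that the solvable-subnormal-implies-central result of Stuth should be cited with the correct theorem number (the paper cites \cite[Theorem 4]{Pa_stuth_64} for that statement, e.g.\ in Corollary~\ref{cor:4.19}).
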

\begin{Proof} This is the consequence of Theorem~\ref{th:3.3} and \cite[Lemma 3.1]{Pa_HaTu_16}.
\end{Proof}

In the remain part, we focus the attention to the investigation of almost subnormal subgroups in the division ring of real quaternions. As an application of Theorem~ \ref{th:3.3}, we shall prove that in this division ring, a subgroup is almost subnormal iff it is normal. Some immediate corollaries from this fact will be also given. 

Let $\mathbb{H}=\R\oplus \R i \oplus \R j \oplus \R k$ be the division ring of real quaternions. For an element $\alpha=a+bi+cj+dk\in \mathbb{H},$ define $\overline\alpha=a-bi-cj-dk.$ The norm function $N: \mathbb{H}\longrightarrow \mathbb{R}^+$ is defined by  $$N(\alpha)=\alpha\overline\alpha=\overline\alpha\alpha=a^2+b^2+c^2+d^2\in \mathbb{R}^+, \mbox{ for any } \alpha\in \mathbb{H}.$$
We call $N(\alpha)$ the \textit{norm} of $\alpha$. Observe that
$N(\alpha\beta)=N(\alpha)N(\beta)$ for every $\alpha, \beta\in\mathbb{H}$, and it is easy to verify that $G_0=\{\alpha\in \mathbb{H}^*| N(\alpha)=1\}$ is  a non-central normal subgroup of $\mathbb{H}^*$.  
\begin{lemma}\label{lem:4.15}{\rm (See \cite[Example]{Pa_Gr_78})} 
	Let $G$ be a subgroup of $\mathbb{H}^*$.   Then, $G$ is a normal in $\mathbb{H}^*$ if and only if either $G$ is central or $G$ contains $G_0.$ 
\end{lemma}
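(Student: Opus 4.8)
The plan is to prove the two implications separately, disposing of the elementary direction first. For the ``if'' direction, if $G$ is central then $G\subseteq Z(\mathbb{H}^*)=\R^*$, and central subgroups are automatically normal. If instead $G\supseteq G_0$, I would use that the norm $N\colon \mathbb{H}^*\to \R^+$ is a group homomorphism (since $N(\alpha\beta)=N(\alpha)N(\beta)$) whose kernel is exactly $G_0$; hence $\mathbb{H}^*/G_0\cong \R^+$ is abelian, so every subgroup lying between $G_0$ and $\mathbb{H}^*$ is automatically normal. This settles the easy direction with essentially no computation.

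The substance lies in the ``only if'' direction: I must show that a normal subgroup $G$ of $\mathbb{H}^*$ which is not central necessarily contains $G_0$. First I would fix an element $g\in G\setminus \R^*$ and observe that, by normality, every commutator $[h,g]=hgh^{-1}g^{-1}$ with $h\in\mathbb{H}^*$ lies in $G$; since each such commutator has norm $N(hgh^{-1}g^{-1})=1$, it also lies in $G_0$. Thus $G\cap G_0$, being the intersection of two normal subgroups, is itself normal in $\mathbb{H}^*$ and in particular normal in $G_0$. Recalling that $G_0$ is precisely the group of unit quaternions, so that $G_0\cong \SU(2)$, the goal reduces to showing that this normal subgroup $G\cap G_0$ is all of $G_0$.

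The key step, which I expect to be the main obstacle, is to rule out the possibility that $G\cap G_0$ is a proper normal subgroup of $\SU(2)$. The proper normal subgroups of $\SU(2)$ are exactly $\{1\}$ and the center $\{\pm 1\}$, a fact I would derive from the simplicity of $\mathrm{PSU}(2)\cong \mathrm{SO}(3)$ as an abstract group together with the observation that $-1$ is the unique element of order $2$ in $\SU(2)$; hence it suffices to exclude the case $G\cap G_0\subseteq\{\pm 1\}$. For this I would invoke a connectedness argument: the map $h\mapsto [h,g]$ from $\mathbb{H}^*$ into $G\cap G_0$ is continuous, and $\mathbb{H}^*=\R^4\setminus\{0\}$ is connected. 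Were its image contained in the discrete set $\{\pm 1\}$, the map would be constant, equal to its value $[1,g]=1$ at the identity; but then $hgh^{-1}=g$ for all $h\in\mathbb{H}^*$, forcing $g\in Z(\mathbb{H}^*)=\R^*$ and contradicting the choice of $g$. Consequently $G\cap G_0$ contains a non-central element of $\SU(2)$, whence $G\cap G_0=G_0$ and $G\supseteq G_0$, as required. The delicate point is precisely this interplay between the abstract group structure of $\SU(2)$ and the topology of $\mathbb{H}^*$: the classification of normal subgroups reduces the problem to a binary alternative, and the connectedness of $\mathbb{H}^*$ is what collapses the unwanted alternative.
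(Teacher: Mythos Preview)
The paper does not prove this lemma at all; it simply records the statement with a citation to Greenfield and moves on. Your argument, by contrast, supplies a complete self-contained proof, and it is correct. The ``if'' direction is immediate from the abelianness of $\mathbb{H}^*/G_0$. For the ``only if'' direction, your reduction to the normal-subgroup structure of $G_0\cong\SU(2)$, combined with the continuity of $h\mapsto[h,g]$ and the connectedness of $\mathbb{H}^*$ to exclude $G\cap G_0\subseteq\{\pm1\}$, is clean and works as stated; the only external inputs are the abstract simplicity of $\mathrm{PSU}(2)\cong\mathrm{SO}(3)$ and the identification of the center of $\SU(2)$, both standard. There is therefore nothing in the paper to compare your approach against---you have provided what the authors chose to outsource to the literature.
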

Let $G$ be any group. We say that the \textit{subnormal property} holds for $G$ if every subnormal subgroup in $G$ is normal in $G$. Some authors call such a group $G$ a \textit{$T$-group}.
\begin{lemma}\label{lem:4.16} {\rm (See \cite[Example]{Pa_Gr_78})} 
	The subnormal property holds for $\mathbb{H}^*.$ In other phrase, $\mathbb{H}^*$ is a $T$-group.
\end{lemma}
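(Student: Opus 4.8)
The plan is to prove that a subnormal subgroup $N$ of $\mathbb{H}^*$ is normal by reducing, via Lemma~\ref{lem:4.15}, to a dichotomy: either $N$ is central, in which case it is normal, or $N$ contains $G_0$, in which case it is again normal. So I would assume $N$ is non-central and aim to show $G_0\subseteq N$.

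First I would exploit that $G_0$ is a non-central normal subgroup of $\mathbb{H}^*$. Since $N$ is subnormal and $G_0$ is normal in $\mathbb{H}^*$, intersecting a subnormal series for $N$ with $G_0$ shows that $N\cap G_0$ is subnormal in $G_0$. Because the center $\mathbb{R}$ of $\mathbb{H}$ is infinite and both $N$ and $G_0$ are non-central almost subnormal in $\mathbb{H}^*$, Theorem~\ref{th:2.5} guarantees that $N\cap G_0$ is itself non-central. Thus the whole problem is reduced to understanding non-central subnormal subgroups of $G_0$.

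The heart of the argument is the structure of $G_0$, the group of unit quaternions, which is isomorphic to $SU(2)$: its center is $\{\pm 1\}\subseteq\mathbb{R}$, the quotient $G_0/\{\pm 1\}\cong SO(3)$ is simple, and $G_0$ is perfect, so it has no subgroup of index $2$. From these facts I would deduce that any subnormal subgroup $S$ of $G_0$ is either central (contained in $\{\pm 1\}$) or equal to $G_0$: the image of $S$ in the simple group $SO(3)$ is subnormal, hence trivial or everything; if trivial then $S\subseteq\{\pm 1\}$, while if everything then $S\cdot\{\pm 1\}=G_0$, forcing $S=G_0$ since $G_0$ has no proper subgroup of index $2$. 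Applying this to $S=N\cap G_0$, which we have arranged to be non-central, yields $N\cap G_0=G_0$, that is, $G_0\subseteq N$, and Lemma~\ref{lem:4.15} finishes the proof. Alternatively, one can run the same endgame through Theorem~\ref{th:2.9}(2): since $[\mathbb{H}^*,\mathbb{H}^*]=G_0$, if $N$ were non-normal then $N\cap G_0$ would be a proper non-central subnormal subgroup of $G_0$, contradicting the classification just described.

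The main obstacle I anticipate is the structural input about $G_0\cong SU(2)$: the simplicity of $SO(3)$ and the perfectness of $SU(2)$ are classical but external to the machinery developed in the paper, so the cleanest exposition would either cite them or include a short self-contained verification. Everything else --- the reduction via Lemma~\ref{lem:4.15}, the subnormality of $N\cap G_0$ in $G_0$, and the non-centrality supplied by Theorem~\ref{th:2.5} --- is routine given the results already established.
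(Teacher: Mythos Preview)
Your argument is correct, but note that the paper does not actually prove Lemma~\ref{lem:4.16}: it is stated with a bare citation to Greenfield~\cite{Pa_Gr_78}, and the subsequent results (Theorem~\ref{th:4.17}, Corollaries~\ref{cor:4.18}--\ref{cor:4.19}) are built on top of it. What you have written is essentially a reconstruction of Greenfield's argument: reduce via Lemma~\ref{lem:4.15} to showing $G_0\subseteq N$, pass to $N\cap G_0$, and invoke the classical fact that $G_0\cong SU(2)$ has only $\{1\},\{\pm 1\},G_0$ as subnormal subgroups (using the simplicity of $SO(3)$ and the absence of index-$2$ subgroups in $SU(2)$). Your appeal to Theorem~\ref{th:2.5} to guarantee that $N\cap G_0$ is non-central is legitimate and not circular, since Theorem~\ref{th:2.5} is established in Section~2 independently of anything in Section~3.

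One caution about your alternative ``endgame through Theorem~\ref{th:2.9}(2)'': you invoke $[\mathbb{H}^*,\mathbb{H}^*]=G_0$, but in the paper's logical order this is Corollary~\ref{cor:4.18}, which is deduced \emph{from} Lemma~\ref{lem:4.16} via Theorem~\ref{th:4.17}. If you want to use that identity here, you must establish it independently --- which you can, since $[\mathbb{H}^*,\mathbb{H}^*]\subseteq G_0$ follows from multiplicativity of the norm and the reverse inclusion follows from the perfectness of $SU(2)$ that you are already importing. Just make this explicit so the argument does not read as circular within the paper's framework.
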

\begin{theorem}\label{th:4.17} Let $\mathbb{H}$ be the division ring of real quaternions. Assume that $G$ is a non-central subgroup of the multiplicative group $\mathbb{H}^*$ of $\mathbb{H}$. Then, the following conditions are equivalent:
	\begin{enumerate}
		\item $G$ is almost subnormal in $\mathbb{H}^*.$
		\item $G$ is subnormal in $\mathbb{H}^*.$
		\item $G$ is normal in $\mathbb{H}^*.$
		\item $G$ contains $G_0.$
	\end{enumerate}
\end{theorem}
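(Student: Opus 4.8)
The plan is to prove the four conditions equivalent by establishing a cycle of implications, exploiting the tools already assembled in the paper. The implications $(3)\Rightarrow(2)\Rightarrow(1)$ are essentially trivial: every normal subgroup is subnormal, and every subnormal subgroup is almost subnormal by definition. So the real content lies in closing the loop with $(1)\Rightarrow(3)$, together with the equivalence of $(3)$ and $(4)$, which is handed to us directly by Lemma~\ref{lem:4.15}: a non-central subgroup $G$ of $\mathbb{H}^*$ is normal if and only if it contains $G_0$. Thus the entire theorem reduces to showing $(1)\Rightarrow(3)$, i.e. that a non-central almost subnormal subgroup of $\mathbb{H}^*$ is in fact normal.

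First I would note that $\mathbb{H}$ is a division ring whose center is $F=\R$, which is infinite, so the machinery of Section~2 and Section~3 applies (in particular Proposition~\ref{pro:2.4} and Theorem~\ref{th:3.3}). The key step is to invoke Theorem~\ref{th:3.3}: since $G$ is a non-central almost subnormal subgroup of $\mathbb{H}^*$, it contains a non-central \emph{subnormal} subgroup $H$ of $\mathbb{H}^*$. This is precisely the reduction from almost subnormal to subnormal that the whole section has been building toward, and it is the heart of the argument. Now I would apply Lemma~\ref{lem:4.16}, the $T$-group property of $\mathbb{H}^*$: the subnormal subgroup $H$ is automatically normal in $\mathbb{H}^*$. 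Being non-central and normal, $H$ contains $G_0$ by Lemma~\ref{lem:4.15}.

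To finish, I would observe that $G_0\subseteq H\subseteq G$, so $G$ itself contains $G_0$; this establishes condition $(4)$ for $G$. Then a second application of Lemma~\ref{lem:4.15} to $G$ (which is non-central by hypothesis and contains $G_0$) yields that $G$ is normal in $\mathbb{H}^*$, which is condition $(3)$. This closes the cycle $(1)\Rightarrow(3)\Rightarrow(2)\Rightarrow(1)$ and, through Lemma~\ref{lem:4.15}, ties in $(4)$ as well.

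The step I expect to be the main obstacle is conceptually the invocation of Theorem~\ref{th:3.3}, though mechanically it is a one-line citation once that theorem is in hand; the genuine difficulty has already been absorbed into proving Theorem~\ref{th:3.3} earlier. The only subtlety to watch is the non-centrality bookkeeping: one must check that the subnormal subgroup $H$ produced by Theorem~\ref{th:3.3} is genuinely non-central so that Lemma~\ref{lem:4.15} can be applied to it (giving $G_0\subseteq H$ rather than the alternative $H\subseteq F$), and then that $G$ inherits non-centrality from the hypothesis so that Lemma~\ref{lem:4.15} applies a second time to $G$. Both facts are immediate here, so I anticipate no real resistance, and the proof should be short.
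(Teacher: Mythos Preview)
Your proof is correct and follows essentially the same route as the paper: establish $(4)\Rightarrow(3)\Rightarrow(2)\Rightarrow(1)$ trivially (via Lemma~\ref{lem:4.15} for the first step), then prove $(1)\Rightarrow(4)$ by invoking Theorem~\ref{th:3.3} to pass to a non-central subnormal subgroup, Lemma~\ref{lem:4.16} to upgrade it to normal, and Lemma~\ref{lem:4.15} to conclude it contains $G_0$. The only cosmetic difference is that you phrase the target as $(1)\Rightarrow(3)$ and apply Lemma~\ref{lem:4.15} a second time, whereas the paper stops at $(1)\Rightarrow(4)$ and closes the cycle there.
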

\begin{Proof} In view of Lemma \ref{lem:4.15}, the implication (4) $\Longrightarrow$ (3) holds. The implications (3) $\Longrightarrow$ (2) $\Longrightarrow$ (1) hold trivially. It remains to prove the implication (1) $\Longrightarrow$ (4). Thus, assume that  $G$ is an almost subnormal subgroup in $\mathbb{H}^*.$ By Theorem \ref{th:3.3}, $G$ contains a non-central subnormal subgroup $M$ of $\mathbb{H}^*.$ In view of Lemma \ref{lem:4.16}, $M$ is normal in $\mathbb{H}^*$, so by Lemma \ref{lem:4.15}, $M$ contains $G_0$. Therefore, $G$ contains $G_0.$
\end{Proof}
\begin{corollary}\label{cor:4.18} If $\mathbb{H}$ is the division ring of real quaternions, then  $$G_0=[\mathbb{H}^*,\mathbb{H}^*].$$
\end{corollary}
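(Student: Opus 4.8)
My plan is to prove the two inclusions $[\mathbb{H}^*,\mathbb{H}^*]\subseteq G_0$ and $G_0\subseteq [\mathbb{H}^*,\mathbb{H}^*]$ separately, using the norm map for the first and the classification of normal subgroups of $\mathbb{H}^*$ (Lemma~\ref{lem:4.15}) together with Theorem~\ref{th:4.17} for the second.

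For the inclusion $[\mathbb{H}^*,\mathbb{H}^*]\subseteq G_0$, the key observation is that the norm $N\colon\mathbb{H}^*\longrightarrow\mathbb{R}^+$ is a group homomorphism, since $N(\alpha\beta)=N(\alpha)N(\beta)$ as recorded above, and its target $\mathbb{R}^+$ is abelian. Consequently any commutator is killed: for $\alpha,\beta\in\mathbb{H}^*$ one computes $N(\alpha\beta\alpha^{-1}\beta^{-1})=N(\alpha)N(\beta)N(\alpha)^{-1}N(\beta)^{-1}=1$. Thus every commutator, and hence the entire derived subgroup $[\mathbb{H}^*,\mathbb{H}^*]$, lies in $\ker N=G_0$.

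For the reverse inclusion $G_0\subseteq[\mathbb{H}^*,\mathbb{H}^*]$, I would argue that $[\mathbb{H}^*,\mathbb{H}^*]$ is a \emph{non-central normal} subgroup of $\mathbb{H}^*$ and then invoke Lemma~\ref{lem:4.15}. Normality is automatic, as the derived subgroup is always normal (indeed characteristic). The point requiring an argument is that $[\mathbb{H}^*,\mathbb{H}^*]$ is non-central: if it were contained in the center $\mathbb{R}^*$, then $\mathbb{H}^*$ would be solvable, and Hua's theorem \cite{Pa_hua_1950} would force $\mathbb{H}$ to be commutative, a contradiction. With non-centrality established, Lemma~\ref{lem:4.15} applies and says that a normal subgroup of $\mathbb{H}^*$ is either central or contains $G_0$; since $[\mathbb{H}^*,\mathbb{H}^*]$ is non-central, it must contain $G_0$.

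Combining the two inclusions yields $G_0=[\mathbb{H}^*,\mathbb{H}^*]$. The only nontrivial step is verifying non-centrality of the derived subgroup, and even that is immediate from Hua's theorem; the structural input doing the real work is the classification in Lemma~\ref{lem:4.15}. I note that Theorem~\ref{th:4.17} could be used in place of Lemma~\ref{lem:4.15} in the second inclusion, since it records the equivalence of normality with containing $G_0$ for non-central subgroups, but the direct appeal to Lemma~\ref{lem:4.15} is the most economical route.
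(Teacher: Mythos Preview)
Your proof is correct and follows essentially the same route as the paper: the paper also establishes $[\mathbb{H}^*,\mathbb{H}^*]\subseteq G_0$ as the obvious inclusion (your norm-map explanation just unpacks this), and obtains the reverse inclusion by noting that $[\mathbb{H}^*,\mathbb{H}^*]$ is non-central normal and invoking Theorem~\ref{th:4.17} (whose relevant implication is exactly Lemma~\ref{lem:4.15}, as you observe). The non-centrality argument via Hua's theorem is the same one the paper uses earlier in Theorem~\ref{th:2.9}(2).
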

\begin{Proof} Clearly, $[\mathbb{H}^*,\mathbb{H}^*]\subseteq G_0$. In the other hand, since $[\mathbb{H}^*,\mathbb{H}^*]$ is a non-central normal subgroup of $\mathbb{H}$, in view of Theorem \ref{th:4.17}, $G_0\subseteq [\mathbb{H}^*, \mathbb{H}^*]$.
\end{Proof} 

\begin{corollary}\label{cor:4.19} The subgroup $G_0$ of $\mathbb{H}^*$ is perfect, and $[G_0,\mathbb{H}^*]=G_0$.
\end{corollary}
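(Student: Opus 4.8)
The plan is to treat both assertions uniformly. Each of $[G_0,G_0]$ and $[G_0,\mathbb{H}^*]$ is a normal subgroup of $\mathbb{H}^*$ that is contained in $G_0$, so the dichotomy of Lemma~\ref{lem:4.15} applies: each is either central or contains, hence equals, $G_0$. It therefore suffices to exclude the central alternative.

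First I would record the inclusion and normality facts. Both commutator subgroups lie in $G_0$: the derived subgroup $[G_0,G_0]$ trivially, and $[G_0,\mathbb{H}^*]\subseteq G_0$ because $G_0\triangleleft\mathbb{H}^*$ (alternatively $[G_0,\mathbb{H}^*]\subseteq[\mathbb{H}^*,\mathbb{H}^*]=G_0$ via Corollary~\ref{cor:4.18}). For normality in $\mathbb{H}^*$: the derived subgroup $[G_0,G_0]$ is characteristic in $G_0$, and a characteristic subgroup of a normal subgroup is normal; while $[G_0,\mathbb{H}^*]$ is normal in $\mathbb{H}^*$ by the standard identity $x[g,h]x^{-1}=[xgx^{-1},xhx^{-1}]$ combined with $G_0\triangleleft\mathbb{H}^*$. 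Thus both subgroups fall under the scope of Lemma~\ref{lem:4.15}.

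The key step is ruling out centrality. Suppose one of these commutator subgroups were central, that is, contained in $F=\mathbb{R}$. In either case this forces $[x,y]\in F$ for all $x,y\in G_0$ (for $[G_0,\mathbb{H}^*]$ one simply restricts the second argument to $G_0$). Then $G_0$ satisfies the generalized group identity $[[x,y],x]=1$ over $\mathbb{H}^*$. Since $G_0$ is a non-central normal, hence almost subnormal, subgroup of $\mathbb{H}^*$ and the center $F=\mathbb{R}$ is infinite, Proposition~\ref{pro:2.4} yields $G_0\subseteq F$, contradicting that $G_0$ is non-central. Consequently neither $[G_0,G_0]$ nor $[G_0,\mathbb{H}^*]$ is central, and Lemma~\ref{lem:4.15} gives $[G_0,G_0]=G_0$ and $[G_0,\mathbb{H}^*]=G_0$.

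The main obstacle is precisely this centrality exclusion; the rest is routine bookkeeping with commutator-subgroup normality. The decisive observation is that centrality of a commutator subgroup collapses $G_0$ into a group satisfying a generalized group identity, which the infinite-center Proposition~\ref{pro:2.4} forbids for a non-central almost subnormal subgroup. I would note that one could equally replace this last step by invoking that a central $[G_0,G_0]$ would make $G_0$ solvable, contradicting Stuth's theorem for solvable subnormal subgroups, but the generalized-identity route keeps both cases in a single argument.
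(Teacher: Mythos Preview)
Your proof is correct and follows the same strategy as the paper's: show that $[G_0,G_0]$ is a non-central (sub)normal subgroup of $\mathbb{H}^*$, then invoke the classification of such subgroups to force $[G_0,G_0]=G_0$. The only cosmetic differences are that the paper excludes centrality via Stuth's theorem (which you yourself note as an alternative) rather than Proposition~\ref{pro:2.4}, and deduces $[G_0,\mathbb{H}^*]=G_0$ from the sandwich $G_0=[G_0,G_0]\subseteq[G_0,\mathbb{H}^*]\subseteq G_0$ instead of a second appeal to Lemma~\ref{lem:4.15}.
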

\begin{Proof} Observe that $[G_0, G_0]$ is a non-central subnormal subgroup of $\mathbb{H}^*$. In fact, if $[G_0, G_0]$ is central, then $G_0$ is solvable, and by \cite[Theorem 4]{Pa_stuth_64}, $G_0$ is central, a contradiction. Now, in view of Theorem \ref{th:4.17}, we have $[G_0, G_0]=G_0$. Consequently, $G_0\subseteq [G_0, \mathbb{H}^*]$. On the other hand, since $G_0$ is normal in $\mathbb{H}^*$, $[G_0, \mathbb{H}^*]\subseteq G_0$. Hence, $[G_0, \mathbb{H}^*]=G_0$. 
\end{Proof} 
\begin{corollary}\label{cor:4.20}  If a non-trivial subgroup $G$ of $\mathbb{H}^*$ is a perfect, then $G$ is non-central and contained in $G_0.$
\end{corollary}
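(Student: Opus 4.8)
The plan is to derive both assertions as nearly immediate consequences of the identification $G_0=[\mathbb{H}^*,\mathbb{H}^*]$ established in Corollary~\ref{cor:4.18}, together with the hypothesis that $G$ is perfect, i.e. $G=[G,G]$.

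First I would dispose of non-centrality by contradiction. Suppose $G$ were central, that is, $G\subseteq F=\R^*$. Then $G$ would be abelian, forcing $[G,G]=\{1\}$. But perfectness gives $G=[G,G]=\{1\}$, contradicting the assumption that $G$ is non-trivial. Hence $G$ must be non-central.

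For the containment $G\subseteq G_0$, the key observation is that taking commutators is monotone with respect to inclusion of subgroups: since $G\le \mathbb{H}^*$, every commutator of elements of $G$ is in particular a commutator of elements of $\mathbb{H}^*$, so $[G,G]\subseteq[\mathbb{H}^*,\mathbb{H}^*]$. Invoking Corollary~\ref{cor:4.18}, which identifies $[\mathbb{H}^*,\mathbb{H}^*]$ with $G_0$, and using that $G$ is perfect, I would conclude
$$G=[G,G]\subseteq[\mathbb{H}^*,\mathbb{H}^*]=G_0,$$
which is exactly the desired inclusion.

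There is no substantive obstacle here: the statement is a short corollary whose entire content is packaged into Corollary~\ref{cor:4.18}. The only point requiring attention is the logical ordering—one must first rule out the central case (where perfectness collapses $G$ to the trivial group) before the commutator inclusion becomes meaningful—but both steps are routine once the equality $G_0=[\mathbb{H}^*,\mathbb{H}^*]$ is in hand.
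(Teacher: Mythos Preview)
Your proof is correct and follows essentially the same line as the paper's: both argue that a non-trivial perfect group cannot be abelian (hence not central), and that $G=[G,G]$ consists of commutators of $\mathbb{H}^*$, which lie in $G_0$. The only cosmetic difference is that the paper appeals directly to the norm computation $N([\alpha,\beta])=1$ rather than routing through Corollary~\ref{cor:4.18}, but the content is the same.
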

\begin{Proof} Clearly, $G$ is non-central. Since   
	$N([\alpha,\beta])=1$ for every $\alpha,\beta\in G$,   $G=[G, G]$ is contained in $G_0.$ 
\end{Proof}
\begin{corollary}\label{cor:4.21} Assume that $G$ is a non-central normal subgroup of  $\mathbb{H}^*$. Then, $G$ is perfect if and only if $G$ is contained in $G_0.$
\end{corollary}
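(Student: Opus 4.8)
The plan is to prove the two implications of the biconditional separately, in each case reducing to a result already established in this section. Throughout, let $G$ be a non-central normal subgroup of $\mathbb{H}^*$. I would first record the trivial but essential observation that a non-central subgroup is automatically non-trivial, since the trivial subgroup $\{1\}$ is contained in the center of $\mathbb{H}^*$; this is what allows the earlier corollaries to be applied.

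For the forward direction, suppose $G$ is perfect. Because $G$ is non-central it is in particular a non-trivial perfect subgroup of $\mathbb{H}^*$, so Corollary~\ref{cor:4.20} applies directly and yields $G\subseteq G_0$. Thus this implication requires no new work beyond invoking Corollary~\ref{cor:4.20}; normality is not even needed here.

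For the backward direction, suppose $G\subseteq G_0$. The key step is to exploit the hypothesis that $G$ is \emph{normal} (not merely subnormal or almost subnormal) together with the fact that $G$ is non-central. By Lemma~\ref{lem:4.15}, a normal subgroup of $\mathbb{H}^*$ is either central or contains $G_0$; since $G$ is non-central, the second alternative must hold, so $G\supseteq G_0$. Combining this with the hypothesis $G\subseteq G_0$ forces $G=G_0$. Finally, Corollary~\ref{cor:4.19} states that $G_0$ is perfect, and therefore $G=G_0$ is perfect, as required.

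I do not expect any genuine obstacle in this argument: both directions collapse onto previously proved statements once the single structural observation is made, namely that for a \emph{non-central normal} subgroup the dichotomy of Lemma~\ref{lem:4.15} pins $G$ down to contain $G_0$, which when intersected with the containment $G\subseteq G_0$ identifies $G$ with $G_0$ exactly. The only point that must be handled with care is to keep track of where non-centrality is used, since it is precisely non-centrality that (i) guarantees non-triviality for the appeal to Corollary~\ref{cor:4.20} and (ii) selects the correct branch of Lemma~\ref{lem:4.15}.
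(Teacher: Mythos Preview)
Your proof is correct and follows essentially the same route as the paper's: both directions are reduced to Corollary~\ref{cor:4.20} and Corollary~\ref{cor:4.19}, with the backward direction pinning down $G=G_0$ via the dichotomy for non-central normal subgroups. The only cosmetic difference is that the paper cites Theorem~\ref{th:4.17} (condition (3)$\Rightarrow$(4)) where you cite Lemma~\ref{lem:4.15} directly, which amounts to the same thing.
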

\begin{Proof}
	Assume that $G$ is contained in $G_0.$ Since $G$ is a non-central normal subgroup of $\mathbb{H}^*,$ $N$ contains $G_0$ by Theorem ~\ref{th:4.17}. Therefore, $G=G_0$ and so $G$ is perfect by Corollary~\ref{cor:4.19}. Conversely, suppose $G$ is perfect. According to Corollary~\ref{cor:4.20}, $G$ is contained in $G_0.$ 
\end{Proof}
%\section{Comments and Remarks}
%\begin{theorem}\label{th:5.1} Let $D$ be a division ring. Assume that $G$ and $H$ are non-central almost subnormal subgroup of $D^*.$ Then so is $G\cap H.$
%\end{theorem}
%\begin{Proof} 
%	According to Theorem~\ref{th:3.3}, $G$ and $H$ contain non-central subnormal subgroups $G_1$ and $H_1$ of $D^*,$ respectively. Clearly, $G\cap H$ contains $G_1\cap H_1.$ By ~\cite[Theorem 5]{Pa_stuth_64}, $G_1\cap H_1$ is non-central. It follows that $G\cap H$ is non-central.
%\end{Proof}
%\begin{theorem}\label{th:5.2} Let $D$ be a non-commutative division ring and $n$ a positive integer. Then $\Delta(\GL_n(D))$ is complete.
%\end{theorem}
%\begin{Proof} This is the consequence of Theorem~\ref{th:2.7} and Theorem~\ref{th:3.3}.
%\end{Proof}

\end{document}